\AddToHook{package/hyperref/before}{\RequirePackage{float}}
\documentclass[pdflatex,sn-mathphys-num]{sn-jnl}
\usepackage[utf8]{inputenc}
\usepackage[T1]{fontenc}
\usepackage[american]{babel}
\usepackage{amsmath,amssymb,amsfonts,amsthm}
\usepackage{enumitem}
\usepackage{booktabs}
\usepackage{graphicx}
\usepackage{placeins}
\usepackage{float}

\floatstyle{ruled}
\newfloat{algorithm}{htbp}{loa}
\floatname{algorithm}{Algorithm}
\usepackage{tikz}
\usetikzlibrary{arrows.meta}
\hypersetup{hidelinks,bookmarksdepth=2}
\IfFileExists{stmaryrd.sty}{\usepackage{stmaryrd}}{%
  \newcommand{\llbracket}{\mathopen{[\![}}%
  \newcommand{\rrbracket}{\mathclose{]\!]}}}

\theoremstyle{thmstyleone}
\newtheorem{theorem}{Theorem}[section]
\newtheorem{prop}[theorem]{Proposition}

\newtheorem{corollary}[theorem]{Corollary}
\theoremstyle{thmstylethree}

\theoremstyle{thmstyletwo}
\newtheorem{remark}[theorem]{Remark}
\newtheorem*{remark*}{Remark}

\DeclareMathOperator{\Tr}{Tr}

\DeclareMathOperator{\rank}{rank}

\DeclareMathOperator*{\argmin}{arg\,min}

\newcommand{\R}{\mathbb{R}}
\newcommand{\one}{\mathbf{1}}
\newcommand{\X}{\mathcal{X}}
\newcommand{\Dphi}{\mathcal{D}_{\phi}^{\pi}}
\newcommand{\norm}[1]{\left\lVert #1\right\rVert}
\newcommand{\inner}[2]{\left\langle #1,#2\right\rangle}
\newcommand{\Fpi}[1]{\norm{#1}_{F,\pi}}
\newcommand{\opnorm}[1]{\norm{#1}_{2\to2,\pi}}
\numberwithin{equation}{section}
\allowdisplaybreaks
\begin{document}

\title[Bregman projections and partition design]{Bregman projections and partition design for accelerating finite Markov chains}

\author*[1]{\fnm{Michael C.H.} \sur{Choi}}\email{mchchoi@nus.edu.sg}
\author[1]{\fnm{Youjia} \sur{Wang}}\email{e1124868@u.nus.edu}

\affil[1]{\orgdiv{Department of Statistics and Data Science},
\orgname{National University of Singapore},
\orgaddress{\street{Level 7, 6 Science Drive 2},
\postcode{117546}, \country{Singapore}}}

\abstract{We study Gibbs averaging of finite Markov chains with a prescribed stationary distribution $\pi$. Given a transition matrix $P$ and the Gibbs kernel $G$ induced by a state-space partition, we prove that $GPG$ is an information projection for every admissible Bregman generator $\phi$, with divergences defined using transition probabilities relative to $\pi$. A Pythagorean identity characterizes this projection and recovers the relative entropy and weighted Frobenius cases. A dual characterization identifies $GPG$ as a kernel closest to stationarity among those with the same projection chain as $P$; for relative entropy, it maximizes the entropy rate. The identities also give monotonicity under partition coarsening. For every reversible ergodic baseline $P$, Gibbs averaging cannot increase the squared weighted Frobenius distance to stationarity at any positive integer time, and the inequality is strict unless $P=GPG$. The corresponding mixing time therefore cannot increase. To choose the partition, we identify the divergence of $G$ from stationarity as the exact worst-case divergence of $GPG$ over all $\pi$-stationary baselines $P$. This objective depends only on the block masses. We give an exact algorithm for concave block costs, including those
arising from relative entropy and power generators $\phi(u)=u^p$
with $1<p<2$, and a greedy approximation algorithm with a provable
guarantee for $p>2$. For power generators $\phi(u)=u^p$ with $p>1$, the objective
is proportional to Tsallis entropy of order $2-p$:
its minimization favors imbalanced block masses when $1<p<2$
and balanced block masses when $p>2$, while all $k$-block
partitions have the same objective value when $p=2$.
A numerical experiment compares the resulting projected chains
with a local Metropolis sampler on a multi-modal $\pi$.}

\keywords{Bregman divergences; information projections; Markov chains;
Gibbs averaging kernels; state-space partition;
mixing times; Markov chain Monte Carlo; Tsallis entropy}

\pacs[2020 Mathematics Subject Classification]{Primary 60J10; Secondary 94A17, 60J22, 90C27, 65C40.}

\maketitle

\section{Introduction}\label{sec:introduction}

Consider a transition matrix $P$ on a finite state space $\X$ with a prescribed stationary distribution $\pi$ of full support. Given a partition $\mathcal O$ of $\X$, let $G$ be the Gibbs kernel that draws from $\pi$ conditional on the block containing the current state. The composition $GPG$ preserves $\pi$ and adds conditional resampling before and after a transition of $P$. Every finite partition can be realized as the orbit partition of a finite group, so this construction also has a group-averaging interpretation. We write $\Pi$ for the transition matrix with every row equal to $\pi$; the precise definitions are given in Section~\ref{sec:preliminaries}.

This construction raises two questions. For a fixed partition, how does averaging change the distance of a transition matrix from stationarity, and how does this change affect convergence under iteration? When the partition is allowed to vary, which choice gives the best improvement according to a specified information-theoretic distance? These questions are relevant to Markov chain Monte Carlo, where conditional updates can supplement a baseline sampler. They also connect approximation of transition matrices with a combinatorial design problem: selecting the blocks on which averaging takes place.

Information geometry provides a framework for studying structural constraints on Markov transition matrices. \citet{WolferWatanabe2021} investigate the exponential and mixture geometry of reversible Markov chains and derive information projections onto the reversible family. \citet{WolferWatanabe2024} develop a geometric account of lumping and Markov embeddings, including their relation to information projections and data processing. \citet{WatanabeWolfer2026} characterize when families of lumpable stochastic matrices form exponential families. These works explain how properties of a chain are reflected in the geometry of families of transition matrices.

Bregman divergences provide a common setting for relative entropy and quadratic approximation. Starting from the convex-analytic construction of \citet{Bregman1967}, \citet{DhillonTropp2007} develop matrix nearness problems and projection algorithms under Bregman divergences. \citet{Adamcik2014} studies their geometry and their use in pooling probability distributions, while \citet{NockEtAl2013} develop matrix Bregman divergences for portfolio selection. In the present paper, we define Bregman divergences between transition matrices of $\pi$-stationary Markov chains by applying a scalar divergence to transition densities relative to $\pi$, with reference weights $\pi\otimes\pi$. Specifically, for a convex continuous generator $\phi:[0,\infty)\to\R$ that is continuously differentiable on $(0,\infty)$, the scalar Bregman divergence is
\[
B_\phi(u,v):=\phi(u)-\phi(v)-\phi'(v)(u-v),
\qquad u\ge0,\quad v>0.
\]
For $\pi$-stationary transition matrices $P$ and $Q$, we define
\[
\Dphi(P\|Q)
:=\sum_{x,y\in\X}\pi(x)\pi(y)
B_\phi\!\left(\frac{P(x,y)}{\pi(y)},\frac{Q(x,y)}{\pi(y)}\right),
\]
with the boundary conventions of Section~\ref{sec:preliminaries} when $Q(x,y)=0$. A special case of Bregman divergence is the squared-Frobenius norm between transition matrices. For a real matrix $A$ indexed by $\X\times\X$, its squared weighted Frobenius norm is
\[
\Fpi{A}^2:=\sum_{x,y\in\X}\frac{\pi(x)}{\pi(y)}A(x,y)^2.
\]
The quadratic generator $\phi(u):=(u-1)^2$ gives $\Dphi(P\|Q)=\Fpi{P-Q}^2$. Thus $\Dphi(P\|\Pi)$ measures divergence from stationarity, and its quadratic specialization is $\Fpi{P-\Pi}^2$.

The projection identities are closely related to established conditional-expectation results. \citet{BanerjeeGuoWang2005} show that conditional expectation is an optimal predictor under Bregman loss. For block-average matrix approximation, \citet[Section~3.1, Lemma~3 and Theorem~2]{BanerjeeEtAl2007} establish a Pythagorean identity and the corresponding two projection characterizations. For our choice of transition densities and reference weights, the conditional block mean is the density of $GPG$. We specialize this geometry to $\pi$-stationary transition matrices and use it to study convergence and partition design.

Our work also builds on the study of group-averaged Markov chains in \citet{ChoiWang2025,ChoiLimWang2025}. In particular, \citet{ChoiLimWang2025} establish the KL information-projection characterization of $GPG$, its reduction to the projection chain, and the KL-optimal Gibbs partition for a prescribed number of blocks. Subsequent work considers partition design for several averaging constructions. \citet{LimChoiTwoBlock2026} study two-block partitions under KL and Frobenius objectives and develop algorithms using difference-of-submodular decompositions. \citet{LimChoiAdditive2026} analyze additive mixtures of a baseline kernel and a Gibbs kernel. \citet{ChoiWangSpectral2026} develop spectral methods for selecting $k$-block partitions, using weighted $k$-means to round spectral embeddings. The results below extend the KL projection and partition principles to admissible Bregman generators and give a comparison of convergence at every positive integer time in the quadratic case.

Information-theoretic partition selection also appears in \citet{LaiChoi2026}, who study coordinate subsets and partitions of multivariate Markov chains through entropy and KL objectives, using submodular and supermodular structure to obtain greedy algorithms with guarantees. Their optimization variables are subsets and partitions of the coordinate set. Here we select blocks of the state space to define Gibbs averaging kernels, and use Bregman distance to stationarity to guide that selection.

Our main contributions are as follows. Throughout, we use Bregman divergences as distance measures for transition matrices and as objective functions for designing optimal or approximately optimal partitions.
\begin{itemize}[leftmargin=*]
\item \textbf{Bregman projections for stationary transition matrices.}
We define a $\pi$-weighted Bregman divergence that includes relative entropy and the squared-Frobenius norm. For every admissible generator, we prove that $GPG$ minimizes the divergence from $P$ over the stationary kernels whose transition densities are constant on block pairs. A Pythagorean identity gives the exact approximation error, and the minimizer is unique when the generator is strictly convex. Reversibility of $P$ is not required.

\item \textbf{Dual projection, reduction, and partition coarsening.}
We show that $GPG$ also minimizes the divergence to $\Pi$ among stationary kernels with the same projection chain as $P$. For relative entropy, it maximizes the entropy rate within this class. The divergence of every positive power of $GPG$ reduces exactly to that of the corresponding power of the projection chain. Nested partitions yield a telescoping Pythagorean identity and monotonicity of the divergence under partition coarsening.

\item \textbf{Frobenius improvement at every positive time.}
For every reversible ergodic $P$, every partition into $k$ nonempty blocks, and every positive integer $t$, we prove
\[
\Fpi{(GPG)^t-\Pi}^2\le\Fpi{P^t-\Pi}^2.
\]
The inequality is strict at every such time unless $P=GPG$. Consequently, the mixing time defined by the squared weighted Frobenius distance cannot increase. The comparison holds for every $k\in\llbracket2,N\rrbracket$, without a commutation assumption on $P$ and $G$.

\item \textbf{An exact worst-case objective for partition design.}
We identify $\Dphi(G\|\Pi)$ as the maximum of $\Dphi(GPG\|\Pi)$ over all $\pi$-stationary baselines $P$, attained at the identity kernel. Minimizing this quantity therefore gives a partition that optimizes a divergence bound valid for every baseline. We express the objective as a sum of functions of the block masses, making it possible to choose the partition without using the entries of $P$.

\item \textbf{Exact and approximate partition algorithms.}
For concave block costs, we prove that an optimal $k$-block partition consists of the $k-1$ states of smallest stationary mass as singleton blocks and one block containing the remaining states. This includes relative entropy and power generators $\phi(u)=u^p$ with $1<p<2$. For $p>2$, we give an explicit approximation guarantee for a greedy balancing algorithm. The quadratic case is independent of the partition when $k$ is fixed, separating the regimes that favor imbalanced and balanced block masses. A numerical experiment on $100$ states compares the resulting four-block chains with a local Metropolis sampler.
\end{itemize}

The partition objectives guide the design of accelerated chains through a bound on divergence; their optimizers need not minimize a mixing time. The convergence comparison counts transitions of the respective kernels. A computational gain also depends on the cost of constructing the partition and sampling from the conditional distributions defining $G$.

The remainder of the paper is organized as follows. Section~\ref{sec:preliminaries} introduces the divergences, Gibbs kernels, and group-averaging formulation. Section~\ref{sec:projection} proves the projection identities and their consequences. Section~\ref{sec:all-time} establishes the Frobenius comparison and the mixing-time bound. Section~\ref{sec:partition-tuning} develops the partition objectives and algorithms. Section~\ref{sec:numerical} presents the numerical experiment.

\section{Preliminaries}\label{sec:preliminaries}

Throughout this paper, let $\X$ be a finite state space with $N:=|\X|\ge2$, and let $\pi$ be a probability distribution with $\pi(x)>0$ for every $x\in\X$. For integers $a\le b$, we write $\llbracket a,b\rrbracket:=\{a,a+1,\ldots,b\}$ and $\llbracket b\rrbracket:=\llbracket1,b\rrbracket$. We set $\llbracket a,b\rrbracket:=\emptyset$ when $a>b$. All logarithms are natural. A transition matrix $P$ is $\pi$-stationary if $\pi P=\pi$. We denote the set of $\pi$-stationary transition matrices on $\X$ by
\[
\mathcal{S}(\pi):=\{P:P\text{ is a transition matrix on }\X,\ \pi P=\pi\}.
\]
A transition matrix $P$ is $\pi$-reversible if
\[
\pi(x)P(x,y)=\pi(y)P(y,x),\qquad x,y\in\X.
\]
An ergodic transition matrix is irreducible and aperiodic. We denote by $\Pi$ the transition matrix with $\Pi(x,y):=\pi(y)$.

For a probability distribution $\nu$ on a finite set, its Shannon entropy is
\[
H(\nu):=-\sum_z\nu(z)\log\nu(z).
\]
For $Q\in\mathcal S(\pi)$, we also write $H(Q)$ for the entropy rate of a $\pi$-stationary chain $Q$:
\begin{equation}\label{eq:entropy-rate-definition}
H(Q):=-\sum_{x,y\in\X}\pi(x)Q(x,y)\log Q(x,y).
\end{equation}
We use the convention $0\log0:=0$ in both definitions.

The space $\ell^2(\pi)$ is equipped with the inner product
\[
\inner{f}{g}_{\pi}:=\sum_{x\in\X}\pi(x)f(x)g(x),
\qquad \norm{f}_{\pi}^2:=\inner{f}{f}_{\pi}.
\]
We write $\ell^2_0(\pi):=\{f:\pi(f)=0\}$, where $\pi(f):=\sum_x\pi(x)f(x)$. A matrix $A$ acts on functions by $Af(x):=\sum_y A(x,y)f(y)$. Its $\ell^2(\pi)$-adjoint is
\[
A^*(x,y):=\frac{\pi(y)}{\pi(x)}A(y,x).
\]
In particular, a transition matrix $P$ is $\pi$-reversible if and only if $P=P^*$. For self-adjoint matrices, $A\preceq B$ means that $\inner{f}{Af}_{\pi}\le\inner{f}{Bf}_{\pi}$ for every $f\in\ell^2(\pi)$.

For real matrices $A,B$, define the weighted Frobenius inner product and norm by
\begin{align*}
\inner{A}{B}_{F,\pi}
&:=\Tr(A^*B)
=\sum_{x,y\in\X}\frac{\pi(x)}{\pi(y)}A(x,y)B(x,y),\\
\Fpi{A}^2&:=\inner{A}{A}_{F,\pi}.
\end{align*}
This is the Hilbert--Schmidt norm of $A$ acting on $\ell^2(\pi)$. We use $\opnorm{A}$ for the corresponding operator norm. If $P\in\mathcal{S}(\pi)$, Jensen's inequality and stationarity give
\[
\norm{Pf}_{\pi}^2
\le\sum_{x,y}\pi(x)P(x,y)f(y)^2
=\norm{f}_{\pi}^2.
\]
Since $P\Pi=\Pi P=\Pi$, the kernel $P$ preserves $\ell^2_0(\pi)$ and is a contraction on that subspace. It follows that
\begin{equation}
\Fpi{P-\Pi}^2\le N-1.
\end{equation}

\subsection{Bregman divergences}

Let $\phi:[0,\infty)\to\R$ be convex and continuous, and assume that it is continuously differentiable on $(0,\infty)$. The scalar Bregman divergence generated by $\phi$ is
\begin{equation}
B_\phi(u,v):=\phi(u)-\phi(v)-\phi'(v)(u-v),
\qquad u\ge0,\quad v>0.
\end{equation}
This construction goes back to \citet{Bregman1967}. At the boundary, we set $B_\phi(0,0):=0$. If the right derivative $\phi'_+(0)$ is finite, we use the same formula at $v=0$ with this derivative. If $\phi'_+(0)=-\infty$, we set $B_\phi(u,0):=+\infty$ for $u>0$. These conventions cover sparse transition matrices and the relative entropy generator.

For $P,Q\in\mathcal{S}(\pi)$, define
\begin{equation}\label{eq:kernel-Bregman}
\Dphi(P\|Q)
:=\sum_{x,y\in\X}\pi(x)\pi(y)
B_\phi\!\left(\frac{P(x,y)}{\pi(y)},\frac{Q(x,y)}{\pi(y)}\right).
\end{equation}
Thus the divergence is formed from the transition densities relative to $\pi$, with reference measure $\pi\otimes\pi$. We use these densities and weights throughout the paper. Convexity gives $\Dphi(P\|Q)\ge0$. If $\phi$ is strictly convex, equality holds if and only if $P=Q$. To see this, we note that strict convexity of $\phi$ gives $B_\phi(u,v)\ge0$, with equality
if and only if $u=v$, including the stated boundary conventions.
Since $\pi(x)\pi(y)>0$ for every $x,y\in\X$, the sum defining
$\Dphi(P\|Q)$ vanishes if and only if every summand vanishes.
This is equivalent to
\[
\frac{P(x,y)}{\pi(y)}=\frac{Q(x,y)}{\pi(y)}
\qquad\text{for every }x,y\in\X,
\]
which holds if and only if $P=Q$.

Adding an affine function to $\phi$ does not change $B_\phi$. We may therefore normalize the generator so that $\phi(1)=\phi'(1)=0$. Under this normalization,
\begin{equation*}
\Dphi(P\|\Pi)
=\sum_{x,y\in\X}\pi(x)\pi(y)
\phi\!\left(\frac{P(x,y)}{\pi(y)}\right).
\end{equation*}
Two choices will be used repeatedly. For $\phi(u)=u\log u-u+1$, with $0\log0=0$, the linear terms cancel and
\begin{equation*}
\Dphi(P\|Q)
=D_{\mathrm{KL}}^\pi(P\|Q)
:=\sum_{x,y}\pi(x)P(x,y)\log\frac{P(x,y)}{Q(x,y)}.
\end{equation*}
The summand is zero when $P(x,y)=0$, and it is infinite when $P(x,y)>0=Q(x,y)$. For $\phi(u)=(u-1)^2$, we have $B_\phi(u,v)=(u-v)^2$ and
\begin{equation*}
\Dphi(P\|Q)=\Fpi{P-Q}^2.
\end{equation*}
Other admissible generators include $\phi(u)=(u-1)^4$ and $\phi(u)=u^p$, $p>1$, with an affine normalization if desired.


\subsection{State-space partitions and Gibbs kernels}

Let $\mathcal{O}:=(\mathcal{O}_i)_{i=1}^k$ be a partition of $\X$ into $k$ nonempty blocks, and write $\overline\pi(i):=\pi(\mathcal{O}_i)$. The Gibbs kernel induced by $\mathcal{O}$ is
\begin{equation}
G(x,y):=\one_{\{y\in\mathcal{O}(x)\}}
\frac{\pi(y)}{\pi(\mathcal{O}(x))},
\end{equation}
where $\mathcal{O}(x)$ is the block containing $x$. One transition of $G$ resamples the state from $\pi$ conditional on its current block.

\begin{prop}\label{prop:G-projection}
The kernel $G\in\mathcal{S}(\pi)$ is reversible and satisfies
\[
G^2=G,\qquad G\Pi=\Pi G=\Pi.
\]
It is the orthogonal projection onto the subspace of functions that are constant on each block. In particular, $\rank(G)=k$, and $G-\Pi$ is an orthogonal projection of rank $k-1$ on $\ell^2_0(\pi)$.
\end{prop}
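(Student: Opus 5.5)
The plan is to verify each property directly from the formula for $G$ and then read off the projection statements from self-adjointness and idempotence.

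\emph{Stationarity and reversibility.} Fix $x,y$. If $y\notin\mathcal{O}(x)$, then $\pi(x)G(x,y)=0=\pi(y)G(y,x)$. If $y\in\mathcal{O}(x)$, then $\mathcal{O}(y)=\mathcal{O}(x)$, and
\[
\pi(x)G(x,y)=\frac{\pi(x)\pi(y)}{\pi(\mathcal{O}(x))},
\]
which is symmetric in $x$ and $y$. So $G$ is $\pi$-reversible. Summing the reversibility identity over $x$ gives $\pi G=\pi$, and each row sums to $\sum_{y\in\mathcal{O}(x)}\pi(y)/\pi(\mathcal{O}(x))=1$. Hence $G\in\mathcal{S}(\pi)$, and $G=G^*$ is self-adjoint on $\ell^2(\pi)$.

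\emph{Action on functions.} Write $Gf(x)=\pi(f\mid\mathcal{O}(x))$ for the conditional mean of $f$ on the block of $x$. This shows two things. First, $Gf$ is constant on blocks. Second, if $f$ is constant on blocks, then $Gf=f$. Let $V$ be the space of block-constant functions. Then $G$ maps $\ell^2(\pi)$ onto $V$ and fixes $V$ pointwise, so $G^2=G$. A self-adjoint idempotent is the orthogonal projection onto its range, so $G$ is the orthogonal projection onto $V$. Since $V$ has basis $\{\one_{\mathcal{O}_i}\}_{i=1}^k$, we get $\rank(G)=\dim V=k$.

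\emph{Relations with $\Pi$.} $\Pi f=\pi(f)\one$ is the orthogonal projection onto the constants. The constants lie in $V$, so $G\Pi=\Pi$. Taking adjoints, using $G^*=G$ and $\Pi^*=\Pi$, gives $\Pi G=\Pi$. These can also be checked entrywise: $\Pi G=\Pi$ follows from $\pi G=\pi$, and $G\Pi=\Pi$ from the row sums of $G$.

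\emph{The difference $G-\Pi$.} It is self-adjoint, and
\[
(G-\Pi)^2=G-G\Pi-\Pi G+\Pi=G-\Pi,
\]
so it is an orthogonal projection on $\ell^2(\pi)$, with range $V\cap\ell^2_0(\pi)$. This range has dimension $k-1$, because $V$ contains the constants and $\ell^2_0(\pi)$ is their orthogonal complement. Since $G-\Pi$ kills the constants, it preserves $\ell^2_0(\pi)$, so its restriction there is an orthogonal projection of rank $k-1$.

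None of these steps is a real obstacle. The only point needing care is the equivalence ``self-adjoint $+$ idempotent $\Rightarrow$ orthogonal projection onto the range'' in the weighted inner product. It holds because adjoints are taken with respect to $\inner{\cdot}{\cdot}_{\pi}$, which makes the range and kernel of $G$ $\pi$-orthogonal.
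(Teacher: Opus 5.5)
Your proof is correct and takes the same route as the paper. You check detailed balance blockwise, use the conditional-mean formula for $Gf$ to obtain $G^2=G$, and use self-adjointness to identify $G$ as the orthogonal projection onto block-constant functions. Beyond that, you write out $G\Pi=\Pi G=\Pi$ and $(G-\Pi)^2=G-\Pi$, with range equal to the block-constant functions of $\pi$-mean zero, which the paper leaves implicit in its remark that the constants form a one-dimensional subspace of the range of $G$.
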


\begin{proof}
If $x,y\in\mathcal{O}_i$, then $\pi(x)G(x,y)=\pi(x)\pi(y)/\overline\pi(i)$, which is symmetric in $x,y$. If the states lie in different blocks, both sides of detailed balance are zero. The formula
\[
Gf(x)=\frac{1}{\overline\pi(i)}\sum_{y\in\mathcal{O}_i}\pi(y)f(y),
\qquad x\in\mathcal{O}_i,
\]
shows that $Gf$ is block-constant and that $G$ fixes every block-constant function. Thus $G^2=G$. Self-adjointness then identifies $G$ as the stated orthogonal projection. Since constants form a one-dimensional subspace of its range, the assertions about $G-\Pi$ follow.
\end{proof}

For a fixed partition, define the class of $G$-invariant kernels by
\begin{equation*}
\mathcal{G}
:=\{Q\in\mathcal{S}(\pi):GQG=Q\}.
\end{equation*}
The condition $GQG=Q$ is equivalent to $GQ=QG=Q$. It is also equivalent to requiring that $Q(x,y)/\pi(y)$ be constant on each block pair $\mathcal{O}_i\times\mathcal{O}_j$. The class $\mathcal{G}$ is convex, contains $\Pi$, and contains $GPG$ for every $P\in\mathcal{S}(\pi)$.

The projection chain of $P\in\mathcal{S}(\pi)$ on $\llbracket k\rrbracket$, as in \citet[Section~2]{JerrumEtAl2004}, is
\begin{equation*}
\overline P(i,j)
:=\frac{1}{\overline\pi(i)}
\sum_{\substack{x\in\mathcal{O}_i\\y\in\mathcal{O}_j}}
\pi(x)P(x,y).
\end{equation*}
It has stationary distribution $\overline\pi$. We write $H(\overline P)$ for its entropy rate, computed using $\overline\pi$ in \eqref{eq:entropy-rate-definition}. Direct multiplication gives
\begin{equation}\label{eq:GPG-block-form}
(GPG)(x,y)=\overline P(i,j)\frac{\pi(y)}{\overline\pi(j)},
\qquad x\in\mathcal{O}_i,\ y\in\mathcal{O}_j.
\end{equation}
This formula also verifies the block-density characterization of $\mathcal{G}$ above. We also see that
\begin{align}\label{eq:overlineGPGoverlineP}
	\overline{GPG} = \overline{P}.
\end{align}

\subsection{Group averaging}

Every finite partition can be realized as the orbit partition of a finite group. For example, choose a cyclic permutation on each block and take the product of the resulting cyclic groups. Conversely, the orbits of any finite group action on $\X$ form a partition and hence induce a Gibbs kernel.

Let $\Gamma$ be a finite group acting on $\X$, with orbit partition
$\mathcal O$. Each group element $g\in\Gamma$ acts as a permutation
of $\X$, and we write $gx:=g\cdot x$ for the image of $x$ under
this action. Thus, as $g$ ranges over $\Gamma$, the states $gx$
range over the orbit containing $x$, possibly with repetitions.
The Gibbs operator can be written as the weighted group average
\begin{equation}\label{eq:weighted-group-average}
	Gf(x)=
	\frac{\sum_{g\in\Gamma}\pi(gx)f(gx)}
	{\sum_{g\in\Gamma}\pi(gx)}.
\end{equation}
Indeed, every point in the orbit of $x$ occurs the same number of times in these sums. When $\pi(gx)=\pi(x)$ for all $g,x$, the weights cancel and
\[
G=\frac1{|\Gamma|}\sum_{g\in\Gamma}U_g,
\qquad U_gf(x):=f(gx).
\]
In this case,
\[
GPG=\frac1{|\Gamma|^2}\sum_{g,h\in\Gamma}U_gPU_h.
\]
For general $\pi$, formula~\eqref{eq:weighted-group-average} preserves the Gibbs interpretation without requiring invariance of the target under the group. We use the partition formulation below, so the particular group realizing a partition does not enter the results. This formulation agrees with the finite-state averaging constructions in \citet{ChoiWang2025,ChoiLimWang2025}.

\section{Bregman information projections}\label{sec:projection}

For $P\in\mathcal{S}(\pi)$, the kernel $GPG$ is the information projection of $P$ onto $\mathcal{G}$ for every strictly convex generator satisfying the assumptions of Section~\ref{sec:preliminaries}. The same kernel minimizes all these divergences because its density is the conditional mean of the density of $P$ on each block pair.

\begin{theorem}[Pythagorean identity]\label{thm:Bregman-projection}
Let $P\in\mathcal{S}(\pi)$ and let $G$ be induced by a fixed partition. Then, for every $Q\in\mathcal{G}$,
\begin{equation}\label{eq:Bregman-Pythagoras}
\Dphi(P\|Q)=\Dphi(P\|GPG)+\Dphi(GPG\|Q).
\end{equation}
The identity is understood in $[0,\infty]$, and $\Dphi(P\|GPG)<\infty$. Consequently,
\begin{equation*}
GPG\in\argmin_{Q\in\mathcal{G}}\Dphi(P\|Q).
\end{equation*}
If $\phi$ is strictly convex, this minimizer is unique.
\end{theorem}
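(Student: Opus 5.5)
The plan is to reduce the identity to the standard three-point identity for scalar Bregman divergences, combined with a block-averaging argument.

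Write $p(x,y):=P(x,y)/\pi(y)$, $q(x,y):=Q(x,y)/\pi(y)$ and $m(x,y):=(GPG)(x,y)/\pi(y)$. By \eqref{eq:GPG-block-form}, for $x\in\mathcal O_i$, $y\in\mathcal O_j$,
\[
m(x,y)=\frac{\overline P(i,j)}{\overline\pi(j)}
=\frac{\sum_{x'\in\mathcal O_i,y'\in\mathcal O_j}\pi(x')\pi(y')\,p(x',y')}{\overline\pi(i)\overline\pi(j)}.
\]
Thus $m$ is the $\pi\otimes\pi$-conditional mean of $p$ on each block pair. Since $Q\in\mathcal G$, the density $q$ is constant on block pairs; call its value $q_{ij}$.

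For $v>0$ we have the pointwise three-point identity
\[
B_\phi(u,v)=B_\phi(u,w)+B_\phi(w,v)+\bigl(\phi'(w)-\phi'(v)\bigr)(u-w),
\]
valid for $u\ge0$ and $w>0$. This is a routine expansion of the definitions.

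On a fixed block pair, take $u=p(x,y)$, $w=m_{ij}$ and $v=q_{ij}$, and sum against $\pi(x)\pi(y)$. The factor $\phi'(m_{ij})-\phi'(q_{ij})$ is constant on the block pair. The weighted sum of $p-m_{ij}$ vanishes because $m_{ij}$ is the conditional mean. Hence the cross term disappears. Summing over all block pairs gives \eqref{eq:Bregman-Pythagoras} whenever all $m_{ij},q_{ij}>0$.

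The main obstacle is the boundary cases, which must be handled carefully in $[0,\infty]$.

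First suppose $m_{ij}=0$. Then $p\equiv0$ on the block pair, because the mean of nonnegative terms vanishes. The $\Dphi(P\|GPG)$ contribution is $B_\phi(0,0)=0$. The remaining two contributions are both $\pi\otimes\pi$-weighted sums of $B_\phi(0,q_{ij})$, so the identity holds trivially on that pair.

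Next suppose $q_{ij}=0<m_{ij}$. If $\phi'_+(0)=-\infty$, both sides are $+\infty$, since some $p>0$ on the pair and $B_\phi(m_{ij},0)=+\infty$. If $\phi'_+(0)$ is finite, the three-point identity holds with $v=0$ using $\phi'_+(0)$. The cross term still vanishes by the same mean-zero argument.

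For finiteness of $\Dphi(P\|GPG)$: whenever $m_{ij}>0$, every term $B_\phi(p,m_{ij})$ is finite. Whenever $m_{ij}=0$, we have $p=0$, so the term is $B_\phi(0,0)=0$.

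The conclusions then follow directly. Since $\Dphi(GPG\|Q)\ge0$, the identity gives $\Dphi(P\|Q)\ge\Dphi(P\|GPG)$ for all $Q\in\mathcal G$. Also $GPG\in\mathcal G$ by Proposition~\ref{prop:G-projection}, since $G^2=G$. So $GPG$ is a minimizer.

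For uniqueness under strict convexity, let $Q$ be another minimizer. Because $\Dphi(P\|GPG)$ is finite, the identity forces $\Dphi(GPG\|Q)=0$. By the strict-convexity characterization of equality in Section~\ref{sec:preliminaries}, this gives $Q=GPG$.
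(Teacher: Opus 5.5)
Your proposal is correct and follows essentially the same route as the paper: you apply the scalar three-point identity to the densities of $P$, $GPG$ and $Q$ and sum over each block pair. The cross term vanishes because $\phi'(m_{ij})-\phi'(q_{ij})$ is constant there and the density of $GPG$ is the $\pi\otimes\pi$ block mean of that of $P$, which is the paper's flow cancellation $\overline{GPG}=\overline P$; you then handle the cases where $GPG$ or $Q$ vanishes on a block pair exactly as the paper does, and your uniqueness step, which explicitly uses $\Dphi(P\|GPG)<\infty$ to cancel in $[0,\infty]$, is a slightly more careful version of the paper's final line.
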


\begin{proof}
By \eqref{eq:GPG-block-form}, the densities $(GPG)(x,y)/\pi(y)$ and $Q(x,y)/\pi(y)$ are constant on each block pair. Moreover, $P$ and $GPG$ have the same stationary flow between any two blocks:
\begin{equation}\label{eq:block-cancellation}
\sum_{x\in\mathcal{O}_i,\,y\in\mathcal{O}_j}
\pi(x)\bigl(P(x,y)-(GPG)(x,y)\bigr)= \overline{\pi}(i) \overline{P}(i,j) - \overline{\pi}(i) \overline{GPG}(i,j) = 0,
\end{equation}
where the last equality follows from \eqref{eq:overlineGPGoverlineP}. For $v,w>0$ and $u\ge0$, direct expansion gives the scalar identity
\begin{equation}\label{eq:three-point}
B_\phi(u,w)=B_\phi(u,v)+B_\phi(v,w)
+(u-v)\bigl(\phi'(v)-\phi'(w)\bigr).
\end{equation}
First consider a block pair on which $GPG$ and $Q$ are positive. Apply \eqref{eq:three-point} to the densities of $P,GPG,Q$, respectively, and sum with weights $\pi(x)\pi(y)$. The difference between the two sides of \eqref{eq:Bregman-Pythagoras} on this block pair is
\begin{align}\label{eq:block-Pythagoras}
\sum_{x\in\mathcal{O}_i,\,y\in\mathcal{O}_j}
&\pi(x)\bigl(P(x,y)-(GPG)(x,y)\bigr)
\left[
\phi'\!\left(\frac{(GPG)(x,y)}{\pi(y)}\right)
-\phi'\!\left(\frac{Q(x,y)}{\pi(y)}\right)
\right] \nonumber \\
&= \left[
\phi'\!\left(\frac{\overline{GPG}(i,j)}{\overline{\pi}(j)}\right)
-\phi'\!\left(\frac{\overline{Q}(i,j)}{\overline{\pi}(j)}\right)
\right]\sum_{x\in\mathcal{O}_i,\,y\in\mathcal{O}_j}
\pi(x)\bigl(P(x,y)-(GPG)(x,y)\bigr)
.
\end{align}
The bracketed term is constant on the block pair using \eqref{eq:GPG-block-form} and $GQG = Q$, so this sum vanishes by \eqref{eq:block-cancellation}.

If $GPG$ vanishes on a block pair, nonnegativity and \eqref{eq:block-cancellation} imply that $P$ vanishes there as well. The identity then holds directly on that block pair. If $Q$ vanishes and $GPG$ is positive, the same calculation uses $\phi'_+(0)$ when this derivative is finite. When $\phi'_+(0)=-\infty$, both $\Dphi(P\|Q)$ and $\Dphi(GPG\|Q)$ have an infinite contribution from that block pair. In all cases, $\Dphi(P\|GPG)$ remains finite.

Summing over the block pairs proves \eqref{eq:Bregman-Pythagoras}. Nonnegativity of $\Dphi(GPG\|Q)$ proves the minimizing property. Under strict convexity, this term vanishes only when $Q=GPG$, proving uniqueness.
\end{proof}

\begin{corollary}[Distance to stationarity]\label{cor:stationarity-Pythagoras}
For every $P\in\mathcal{S}(\pi)$,
\begin{equation}\label{eq:stationarity-Pythagoras}
\Dphi(P\|\Pi)
=\Dphi(P\|GPG)+\Dphi(GPG\|\Pi).
\end{equation}
In particular, $\Dphi(GPG\|\Pi)\le\Dphi(P\|\Pi)$.
\end{corollary}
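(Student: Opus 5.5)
The plan is to apply Theorem~\ref{thm:Bregman-projection} with the specific choice $Q=\Pi$. First I will check that $\Pi\in\mathcal{G}$. This was noted after the definition of $\mathcal{G}$, and it also follows directly from Proposition~\ref{prop:G-projection}: since $G\Pi=\Pi G=\Pi$, we get $G\Pi G=\Pi$, and clearly $\Pi\in\mathcal{S}(\pi)$. Equivalently, the density $\Pi(x,y)/\pi(y)\equiv 1$ is constant on every block pair.

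Substituting $Q=\Pi$ into \eqref{eq:Bregman-Pythagoras} then gives \eqref{eq:stationarity-Pythagoras} verbatim, as an identity in $[0,\infty]$. In fact every term is finite here. The density of $\Pi$ equals $1>0$ everywhere, so no boundary convention with $v=0$ is ever invoked, and each summand $B_\phi(u,1)$ is finite for $u\ge 0$ because $\phi$ is continuous on $[0,\infty)$. Theorem~\ref{thm:Bregman-projection} already asserts $\Dphi(P\|GPG)<\infty$, although finiteness is not actually needed for the inequality.

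For the inequality $\Dphi(GPG\|\Pi)\le\Dphi(P\|\Pi)$, I will drop the term $\Dphi(P\|GPG)$, which is nonnegative because $B_\phi\ge0$ by convexity (as noted in Section~\ref{sec:preliminaries}, including the boundary conventions). There is no real obstacle here; the only point requiring care is verifying $\Pi\in\mathcal{G}$, and that is immediate.
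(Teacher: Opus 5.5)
Your proposal is correct and matches the paper's proof, which takes $Q=\Pi\in\mathcal{G}$ in Theorem~\ref{thm:Bregman-projection} and uses nonnegativity of $\Dphi(P\|GPG)$ to get the inequality. Your added checks, that $G\Pi G=\Pi$ and that every term is finite because the density of $\Pi$ is identically $1$, are accurate but not needed beyond what the theorem already provides.
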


\begin{proof}
Take $Q=\Pi\in\mathcal{G}$ in Theorem~\ref{thm:Bregman-projection}.
\end{proof}

For the relative entropy generator $\phi(u)=u\log u-u+1$, Theorem~\ref{thm:Bregman-projection} recovers the KL information projection and Pythagorean identities for Gibbs averaging in \citet{ChoiLimWang2025}, and the corresponding finite-state group-averaging identities in \citet{ChoiWang2025}. For the quadratic generator $\phi(u)=(u-1)^2$, it gives
\begin{equation}\label{eq:F-Pythagoras}
\Fpi{P-Q}^2
=\Fpi{P-GPG}^2+\Fpi{GPG-Q}^2,
\qquad Q\in\mathcal{G}.
\end{equation}
Thus the Hilbert--Schmidt projection interpretation is a special case of the same block-mean calculation. In this case the cancellation can also be expressed as the orthogonality
\[
\inner{P-GPG}{GPG-Q}_{F,\pi}=0.
\]
Neither Theorem~\ref{thm:Bregman-projection} nor these special cases require reversibility of $P$.

The projection chain gives a further reduction of the divergence after averaging. Let $\overline\Pi(i,j):=\overline\pi(j)$, and define $\mathcal D_\phi^{\overline\pi}$ by \eqref{eq:kernel-Bregman} on $\llbracket k\rrbracket$.

\begin{prop}[Reduction to the projection chain]\label{prop:divergence-reduction}
For every $P\in\mathcal{S}(\pi)$ and every positive integer $t$,
\begin{align}
(GPG)^t(x,y)
&=\overline P^{\,t}(i,j)\frac{\pi(y)}{\overline\pi(j)},
\quad x\in\mathcal{O}_i,\ y\in\mathcal{O}_j,\label{eq:power-lift}\\
\Dphi\bigl((GPG)^t\|\Pi\bigr)
&=\mathcal D_\phi^{\overline\pi}
\bigl(\overline P^{\,t}\|\overline\Pi\bigr).\notag
\end{align}
\end{prop}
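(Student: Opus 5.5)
The plan is to prove the lift formula \eqref{eq:power-lift} by induction on $t$, and then obtain the divergence identity by regrouping the sum defining $\Dphi$ by block pairs.

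\textbf{Base case and induction.} The case $t=1$ is exactly \eqref{eq:GPG-block-form}. For the inductive step, assume \eqref{eq:power-lift} holds for $t$. Take $x\in\mathcal O_i$ and $y\in\mathcal O_j$, and write
\[
(GPG)^{t+1}(x,y)=\sum_{l=1}^k\sum_{z\in\mathcal O_l}(GPG)^t(x,z)(GPG)(z,y).
\]
Substituting the inductive hypothesis and \eqref{eq:GPG-block-form}, the summand becomes
\[
\overline P^{\,t}(i,l)\frac{\pi(z)}{\overline\pi(l)}\,\overline P(l,j)\frac{\pi(y)}{\overline\pi(j)}.
\]
Summing over $z\in\mathcal O_l$ gives $\sum_{z\in\mathcal O_l}\pi(z)/\overline\pi(l)=1$. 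The remaining sum over $l$ is $\overline P^{\,t+1}(i,j)$, which closes the induction. An alternative route uses $G^2=G$ from Proposition~\ref{prop:G-projection} to write $(GPG)^t=G(PG)^tG$, but the direct block computation is cleaner.

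\textbf{Divergence identity.} Use the normalized generator, which is harmless because $B_\phi$ is unchanged by affine modifications of $\phi$. Alternatively, keep $B_\phi$ directly with second argument $\Pi(x,y)/\pi(y)=1=\overline\Pi(i,j)/\overline\pi(j)$. By \eqref{eq:power-lift}, the density satisfies
\[
\frac{(GPG)^t(x,y)}{\pi(y)}=\frac{\overline P^{\,t}(i,j)}{\overline\pi(j)},
\]
which is constant on the block pair $\mathcal O_i\times\mathcal O_j$. Hence
\begin{align*}
\Dphi\bigl((GPG)^t\|\Pi\bigr)
&=\sum_{i,j}\sum_{x\in\mathcal O_i,\,y\in\mathcal O_j}\pi(x)\pi(y)B_\phi\!\left(\frac{\overline P^{\,t}(i,j)}{\overline\pi(j)},1\right)\\
&=\sum_{i,j}\overline\pi(i)\overline\pi(j)B_\phi\!\left(\frac{\overline P^{\,t}(i,j)}{\overline\pi(j)},1\right),
\end{align*}
and the last expression is $\mathcal D_\phi^{\overline\pi}(\overline P^{\,t}\|\overline\Pi)$. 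The second argument equals $1>0$, so no boundary conventions are needed.

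\textbf{Main obstacle.} Nothing here is deep. The only points requiring care are that $\overline P$ is $\overline\pi$-stationary, so that $\mathcal D_\phi^{\overline\pi}$ is defined on it as stated, and the index bookkeeping in the inductive step.
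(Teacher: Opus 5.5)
Your proposal is correct and follows the paper's proof essentially step for step. Like the paper, you prove the lift formula by induction on $t$, summing over $z\in\mathcal O_\ell$ and using $\sum_{z\in\mathcal O_\ell}\pi(z)=\overline\pi(\ell)$, and then obtain the divergence identity by regrouping the sum over block pairs, on which the density is constant and the second argument equals $\overline\Pi(i,j)/\overline\pi(j)=1$.
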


\begin{proof}
	We first prove the formula for $(GPG)^t$ by induction on $t$.
	For $t=1$, \eqref{eq:power-lift} reduces to \eqref{eq:GPG-block-form}, which is true.
	
	Suppose that, for some positive integer $t$,
	\[
	(GPG)^t(x,y)
	=
	\overline{P}^{\,t}(i,j)\frac{\pi(y)}{\overline{\pi}(j)}
	\]
	for every block pair and every $x\in\mathcal{O}_i$,
	$y\in\mathcal{O}_j$. Then
	\begin{align*}
		(GPG)^{t+1}(x,y)
		&=
		\sum_{\ell=1}^k\sum_{z\in\mathcal{O}_\ell}
		(GPG)^t(x,z)(GPG)(z,y)\\
		&=
		\sum_{\ell=1}^k\sum_{z\in\mathcal{O}_\ell}
		\overline{P}^{\,t}(i,\ell)
		\frac{\pi(z)}{\overline{\pi}(\ell)}
		\overline{P}(\ell,j)
		\frac{\pi(y)}{\overline{\pi}(j)}\\
		&=
		\frac{\pi(y)}{\overline{\pi}(j)}
		\sum_{\ell=1}^k
		\overline{P}^{\,t}(i,\ell)\overline{P}(\ell,j)
		\left(
		\frac{\sum_{z\in\mathcal{O}_\ell}\pi(z)}
		{\overline{\pi}(\ell)}
		\right)\\
		&=
		\frac{\pi(y)}{\overline{\pi}(j)}
		\sum_{\ell=1}^k
		\overline{P}^{\,t}(i,\ell)\overline{P}(\ell,j)\\
		&=
		\overline{P}^{\,t+1}(i,j)
		\frac{\pi(y)}{\overline{\pi}(j)}.
	\end{align*}
	The second equality uses the induction hypothesis and the case
	$t=1$. The fourth equality follows from
	$\sum_{z\in\mathcal{O}_\ell}\pi(z)=\overline{\pi}(\ell)$.
	This completes the induction.
	
	Now fix any positive integer $t$.
	Since $\Pi(x,y)=\pi(y)$, the definition of the Bregman divergence
	and the formula just proved yield
	\begin{align*}
		\mathcal{D}_{\phi}^{\pi}\bigl((GPG)^t\|\Pi\bigr)
		&=
		\sum_{i,j=1}^k
		\sum_{\substack{x\in\mathcal{O}_i\\y\in\mathcal{O}_j}}
		\pi(x)\pi(y)
		B_{\phi}\!\left(
		\frac{(GPG)^t(x,y)}{\pi(y)},1
		\right)\\
		&=
		\sum_{i,j=1}^k
		\left(\sum_{x\in\mathcal{O}_i}\pi(x)\right)
		\left(\sum_{y\in\mathcal{O}_j}\pi(y)\right)
		B_{\phi}\!\left(
		\frac{\overline{P}^{\,t}(i,j)}{\overline{\pi}(j)},1
		\right)\\
		&=
		\sum_{i,j=1}^k
		\overline{\pi}(i)\overline{\pi}(j)
		B_{\phi}\!\left(
		\frac{\overline{P}^{\,t}(i,j)}{\overline{\pi}(j)},
		\frac{\overline{\Pi}(i,j)}{\overline{\pi}(j)}
		\right)\\
		&=
		\mathcal{D}_{\phi}^{\overline{\pi}}
		\bigl(\overline{P}^{\,t}\|\overline{\Pi}\bigr).
	\end{align*}
	Here the Bregman term is constant on each block pair, and
	$\overline{\Pi}(i,j)=\overline{\pi}(j)$.
\end{proof}

\subsection{Dual projection and maximum entropy rate}\label{subsec:dual-projection}

Fix $P\in\mathcal{S}(\pi)$ and the partition inducing $G$. The stationary flows between blocks determine $GPG$ through \eqref{eq:GPG-block-form}. Consider the class of kernels with these prescribed flows:
\begin{equation*}
\mathcal{F}(P):=\{Q\in\mathcal{S}(\pi):\overline Q=\overline P\}.
\end{equation*}
Equivalently, $Q\in\mathcal{F}(P)$ if and only if
\[
\sum_{x\in\mathcal{O}_i,\,y\in\mathcal{O}_j}\pi(x)Q(x,y)
=
\sum_{x\in\mathcal{O}_i,\,y\in\mathcal{O}_j}\pi(x)P(x,y),
\qquad i,j\in\llbracket k\rrbracket.
\]
This condition fixes the joint distribution of two consecutive block labels when the chain starts from $\pi$. 

\begin{prop}[Dual information projection]\label{prop:dual-projection}
For every $Q\in\mathcal{F}(P)$ and $T\in\mathcal{G}$,
\begin{equation}\label{eq:dual-general-Pythagoras}
\Dphi(Q\|T)
=
\Dphi(Q\|GPG)+\Dphi(GPG\|T).
\end{equation}
In particular,
\begin{equation}\label{eq:dual-Pythagoras}
\Dphi(Q\|\Pi)
=
\Dphi(Q\|GPG)+\Dphi(GPG\|\Pi),
\qquad Q\in\mathcal{F}(P),
\end{equation}
and
\begin{equation}\label{eq:dual-minimizer}
GPG\in\argmin_{Q\in\mathcal{F}(P)}\Dphi(Q\|\Pi).
\end{equation}
The minimizer is unique if $\phi$ is strictly convex. Moreover,
$\mathcal{F}(P)\cap\mathcal{G}=\{GPG\}$.
\end{prop}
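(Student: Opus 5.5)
The plan is to reduce everything to Theorem~\ref{thm:Bregman-projection} applied with $Q$ in place of $P$. The key observation is that for $Q\in\mathcal{F}(P)$ we have $\overline Q=\overline P$, so by the block formula \eqref{eq:GPG-block-form} the kernel $GQG$ depends on $Q$ only through its projection chain. Hence $GQG=GPG$ for every $Q\in\mathcal{F}(P)$. Since $Q\in\mathcal{S}(\pi)$, Theorem~\ref{thm:Bregman-projection} applied to $Q$ gives, for every $T\in\mathcal{G}$,
\[
\Dphi(Q\|T)=\Dphi(Q\|GQG)+\Dphi(GQG\|T)=\Dphi(Q\|GPG)+\Dphi(GPG\|T),
\]
understood in $[0,\infty]$. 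This is \eqref{eq:dual-general-Pythagoras}. Taking $T=\Pi\in\mathcal{G}$ yields \eqref{eq:dual-Pythagoras}.

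For the minimization \eqref{eq:dual-minimizer}, I would first note that $GPG\in\mathcal{F}(P)$ by \eqref{eq:overlineGPGoverlineP}. Then \eqref{eq:dual-Pythagoras} together with $\Dphi(Q\|GPG)\ge0$ gives $\Dphi(Q\|\Pi)\ge\Dphi(GPG\|\Pi)$ for every $Q\in\mathcal{F}(P)$.

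The one point needing care is uniqueness, because the identity lives in $[0,\infty]$. Suppose $\Dphi(GPG\|\Pi)=\infty$ with a generator such as $\phi(u)=u\log u-u+1$. This cannot actually happen, since $\Pi$ has all densities equal to $1>0$, so every term $B_\phi(\cdot,1)$ is finite. Thus $\Dphi(GPG\|\Pi)<\infty$. If $Q$ is also a minimizer, then $\Dphi(Q\|GPG)=0$, and strict convexity (with the boundary conventions discussed in Section~\ref{sec:preliminaries}) forces $Q=GPG$.

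Finally, for $\mathcal{F}(P)\cap\mathcal{G}=\{GPG\}$ I would argue directly, without convexity of $\phi$. The inclusion $\supseteq$ holds because $GPG\in\mathcal{G}$ (as noted after the definition of $\mathcal{G}$) and $GPG\in\mathcal{F}(P)$. For $\subseteq$, take $Q\in\mathcal{F}(P)\cap\mathcal{G}$. Then $Q=GQG$ since $Q\in\mathcal{G}$, and $GQG=GPG$ by the key observation above, so $Q=GPG$.

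I expect no serious obstacle. The only subtle step is the extended-real bookkeeping in uniqueness, and it is resolved by the finiteness of divergences to $\Pi$.
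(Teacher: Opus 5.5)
Your proposal is correct and follows the paper's proof essentially step for step. You use $\overline Q=\overline P$ and \eqref{eq:GPG-block-form} to get $GQG=GPG$, apply Theorem~\ref{thm:Bregman-projection} to $Q$, specialize to $T=\Pi$, and use $Q=GQG=GPG$ for the intersection. Your remark that $\Dphi(GPG\|\Pi)<\infty$, because every density of $\Pi$ equals $1$, makes explicit a finiteness check in the uniqueness step that the paper leaves implicit.
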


\begin{proof}
If $Q\in\mathcal{F}(P)$, then \eqref{eq:GPG-block-form} gives $GQG=GPG$. Applying Theorem~\ref{thm:Bregman-projection} to $Q$ with competitor $T$ proves \eqref{eq:dual-general-Pythagoras}. Taking $T=\Pi$ gives \eqref{eq:dual-Pythagoras}. The kernel $GPG$ belongs to $\mathcal{F}(P)$ by \eqref{eq:block-cancellation}. Nonnegativity of $\Dphi(Q\|GPG)$ proves \eqref{eq:dual-minimizer}, and strict convexity makes equality possible only for $Q=GPG$. Finally, if $Q$ also belongs to $\mathcal{G}$, then $Q=GQG=GPG$, proving the intersection statement.
\end{proof}

The two projection problems have different variable arguments. Theorem~\ref{thm:Bregman-projection} minimizes $\Dphi(P\|Q)$ over $Q\in\mathcal{G}$ in the second argument, whereas Proposition~\ref{prop:dual-projection} minimizes $\Dphi(Q\|\Pi)$ over $Q\in\mathcal{F}(P)$ in the first argument. Identity~\eqref{eq:dual-general-Pythagoras} relates these two families through their common element $GPG$, as illustrated in Figure~\ref{fig:dual-Pythagoras}. In the quadratic case, it is an orthogonal decomposition in the weighted Frobenius inner product. For a general generator, the block cancellation in \eqref{eq:block-Pythagoras} gives the same additive identity.

\begin{figure}[!t]
\centering
\begin{tikzpicture}[
  font=\small,
  divergence/.style={-{Stealth[length=1.8mm,width=1.25mm]},
    line width=0.55pt,shorten <=2pt,shorten >=2pt},
  divergence label/.style={fill=white,inner sep=1.5pt},
  vertical family/.style={draw=blue!55!black,line width=0.65pt},
  horizontal plane/.style={draw=gray!65,fill=gray!8,
    fill opacity=0.78,line width=0.45pt},
  point/.style={circle,fill=black,inner sep=1.3pt}
]
  \node[anchor=south,inner sep=0pt,font=\normalsize] at (0,4.6) {$
    \begin{aligned}
      GPG&=\argmin_{T\in\mathcal G}\Dphi(P\|T)\\[0.35em]
      GPG&=\argmin_{Q\in\mathcal F(P)}\Dphi(Q\|\Pi)
    \end{aligned}
  $};
  \coordinate (projection) at (0,0);
  \coordinate (P) at (0,2.7);
  \coordinate (Q) at (0,-2.7);
  \coordinate (T) at (3.8,0);
  \coordinate (Pi) at (-3.8,0);
  \path[horizontal plane]
    (-4.9,-0.6) -- (3.3,-0.6) -- (4.9,0.6)
    -- (-3.3,0.6) -- cycle;
  \draw[vertical family] (0,-3.65) -- (0,3.65);
  \node[above] at (0,3.76) {$\mathcal F(P)$};
  \node[right] at (4.98,0.6) {$\mathcal G$};
  \draw[red!75!black,line width=0.65pt]
    (0,0.24) -- (0.24,0.24) -- (0.24,0);
  \draw[red!75!black,line width=0.65pt]
    (0,-0.24) -- (-0.24,-0.24) -- (-0.24,0);
  \draw[divergence] (P) --
    node[divergence label,midway,sloped,below=5pt] {$\Dphi(P\|GPG)$} (projection);
  \draw[divergence] (projection) --
    node[divergence label,midway,below=2pt] {$\Dphi(GPG\|T)$} (T);
  \draw[divergence] (P) --
    node[divergence label,midway,sloped,above=5pt] {$\Dphi(P\|T)$} (T);
  \draw[divergence] (Q) --
    node[divergence label,midway,sloped,below=5pt] {$\Dphi(Q\|GPG)$} (projection);
  \draw[divergence] (projection) --
    node[divergence label,midway,above=2pt] {$\Dphi(GPG\|\Pi)$} (Pi);
  \draw[divergence] (Q) --
    node[divergence label,midway,sloped,below=5pt] {$\Dphi(Q\|\Pi)$} (Pi);
  \node[point,label={left:$P$}] at (P) {};
  \node[point,label={right:$Q$}] at (Q) {};
  \node[point,label={above:$T$}] at (T) {};
  \node[point,label={below:$\Pi$}] at (Pi) {};
  \node[point,label={[label distance=4pt]below left:$GPG$}]
    at (projection) {};
\end{tikzpicture}
\caption{Pythagorean identities at $GPG$. The paths $P\to GPG\to T$ and $Q\to GPG\to\Pi$ illustrate Theorem~\ref{thm:Bregman-projection} and Proposition~\ref{prop:dual-projection}, respectively, for $T\in\mathcal G$ and $Q\in\mathcal F(P)$. Each arrow indicates divergence in its direction; each diagonal divergence equals the sum along the corresponding path. The vertical line and horizontal plane schematically depict $\mathcal F(P)$ and $\mathcal G$, with their unique intersection at $GPG$; the depicted dimensions are illustrative. The red right-angle marks represent the Bregman Pythagorean identities. Both minimization formulas assume strictly convex $\phi$.}
\label{fig:dual-Pythagoras}
\end{figure}

\begin{samepage}
\begin{corollary}[Maximum entropy rate under prescribed block flows]\label{cor:maximum-entropy}
The kernel $GPG$ uniquely maximizes $H(Q)$ over $Q\in\mathcal{F}(P)$. More precisely,
\begin{equation}\label{eq:entropy-rate-gap}
H(GPG)-H(Q)=D_{\mathrm{KL}}^\pi(Q\|GPG),
\qquad Q\in\mathcal{F}(P).
\end{equation}
Its entropy rate is
\begin{align}
H(GPG)
&=H(\overline P)+
\sum_{j=1}^k\overline\pi(j)
H\bigl(\pi(\,\cdot\,\mid\mathcal{O}_j)\bigr)\label{eq:entropy-rate-decomposition}\\
&=H(\pi)-D_{\mathrm{KL}}^{\overline\pi}
(\overline P\|\overline\Pi).\label{eq:entropy-rate-coarse-KL}
\end{align}
These statements require neither reversibility nor irreducibility.
\end{corollary}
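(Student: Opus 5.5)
The plan is to specialize Proposition~\ref{prop:dual-projection} to the relative entropy generator $\phi(u)=u\log u-u+1$, for which $\Dphi=D_{\mathrm{KL}}^\pi$, and then rewrite each KL term relative to $\Pi$ as an entropy rate. For any $Q\in\mathcal S(\pi)$ we have $\pi(x)Q(x,y)\log\bigl(Q(x,y)/\pi(y)\bigr)$ summed over $x,y$, and by stationarity $\sum_x\pi(x)Q(x,y)=\pi(y)$. Hence
\[
D_{\mathrm{KL}}^\pi(Q\|\Pi)=-H(Q)+H(\pi),
\]
which is always finite because $\Pi>0$. Substituting this into \eqref{eq:dual-Pythagoras} for $Q\in\mathcal F(P)$ gives $H(\pi)-H(Q)=D_{\mathrm{KL}}^\pi(Q\|GPG)+H(\pi)-H(GPG)$. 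This is exactly \eqref{eq:entropy-rate-gap}, and all terms are finite.

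For uniqueness, the generator $u\log u-u+1$ is strictly convex. Therefore $D_{\mathrm{KL}}^\pi(Q\|GPG)=0$ forces $Q=GPG$, by the positivity discussion in Section~\ref{sec:preliminaries}. The boundary conventions are consistent with this, since the divergence is infinite if $Q(x,y)>0=GPG(x,y)$. By \eqref{eq:block-cancellation}, that case cannot actually occur for $Q\in\mathcal F(P)$, since $GPG$ vanishing on a block pair forces the flow of $Q$ there to vanish. Nothing in this argument uses reversibility or irreducibility, because Theorem~\ref{thm:Bregman-projection} and Proposition~\ref{prop:dual-projection} did not.

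For \eqref{eq:entropy-rate-coarse-KL}, apply the identity $D_{\mathrm{KL}}^\pi(Q\|\Pi)=H(\pi)-H(Q)$ with $Q=GPG$, and combine it with Proposition~\ref{prop:divergence-reduction} at $t=1$, which gives $D_{\mathrm{KL}}^\pi(GPG\|\Pi)=D_{\mathrm{KL}}^{\overline\pi}(\overline P\|\overline\Pi)$.

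For \eqref{eq:entropy-rate-decomposition}, I would compute directly from \eqref{eq:GPG-block-form}. Write $\log GPG(x,y)=\log\overline P(i,j)+\log\pi(y\mid\mathcal O_j)$ on pairs where $\overline P(i,j)>0$; other pairs contribute zero. Summing $-\pi(x)GPG(x,y)\log GPG(x,y)$ over $x\in\mathcal O_i$ and $y\in\mathcal O_j$:
\begin{itemize}
\item the first term yields $-\overline\pi(i)\overline P(i,j)\log\overline P(i,j)$;
\item the second yields $-\overline\pi(i)\overline P(i,j)\sum_{y\in\mathcal O_j}\pi(y\mid\mathcal O_j)\log\pi(y\mid\mathcal O_j)$.
\end{itemize}
Summing over $i$ and using $\sum_i\overline\pi(i)\overline P(i,j)=\overline\pi(j)$ produces $\sum_j\overline\pi(j)H(\pi(\cdot\mid\mathcal O_j))$. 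Alternatively, the two displayed expressions can be checked to agree via the chain rule $H(\pi)=H(\overline\pi)+\sum_j\overline\pi(j)H(\pi(\cdot\mid\mathcal O_j))$ together with $D_{\mathrm{KL}}^{\overline\pi}(\overline P\|\overline\Pi)=H(\overline\pi)-H(\overline P)$.

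There is no serious obstacle in this argument. The only care needed is the $0\log0$ and zero-entry bookkeeping, which is what makes the differences of entropies well defined.
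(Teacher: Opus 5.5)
Your proposal is correct and takes the same route as the paper. You substitute the stationarity identity $D_{\mathrm{KL}}^\pi(Q\|\Pi)=H(\pi)-H(Q)$ into \eqref{eq:dual-Pythagoras}, split $\log (GPG)(x,y)$ block-wise via \eqref{eq:GPG-block-form} for \eqref{eq:entropy-rate-decomposition}, and apply Proposition~\ref{prop:divergence-reduction} at $t=1$ for \eqref{eq:entropy-rate-coarse-KL}; your remarks on the finiteness of $D_{\mathrm{KL}}^\pi(Q\|GPG)$ for $Q\in\mathcal F(P)$ and the chain-rule consistency check are correct details that the paper leaves implicit.
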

\end{samepage}

\begin{proof}
Stationarity gives
\[
D_{\mathrm{KL}}^\pi(Q\|\Pi)
=
\sum_{x,y}\pi(x)Q(x,y)\log Q(x,y)
-\sum_y\pi(y)\log\pi(y)
=
H(\pi)-H(Q).
\]
Substituting this identity into \eqref{eq:dual-Pythagoras} proves \eqref{eq:entropy-rate-gap}. The KL divergence vanishes only when $Q=GPG$, giving uniqueness.

To obtain \eqref{eq:entropy-rate-decomposition}, substitute \eqref{eq:GPG-block-form} into \eqref{eq:entropy-rate-definition} and split the logarithm into $\log\overline P(i,j)$ and $\log(\pi(y)/\overline\pi(j))$ on block pairs with positive flow. The first contribution is $H(\overline P)$. The second is
\[
\sum_{i,j}\overline\pi(i)\overline P(i,j)
H\bigl(\pi(\,\cdot\,\mid\mathcal{O}_j)\bigr)
=
\sum_j\overline\pi(j)
H\bigl(\pi(\,\cdot\,\mid\mathcal{O}_j)\bigr).
\]
Zero-flow block pairs contribute zero. Finally, Proposition~\ref{prop:divergence-reduction} at $t=1$ identifies
$D_{\mathrm{KL}}^\pi(GPG\|\Pi)$ with
$D_{\mathrm{KL}}^{\overline\pi}(\overline P\|\overline\Pi)$, proving \eqref{eq:entropy-rate-coarse-KL}.
\end{proof}

The entropy gain has a direct interpretation in terms of dependence.
For $Q\in\mathcal{F}(P)$, define the joint law
\[
\mathbb{P}_Q(X_0=x,X_1=y):=\pi(x)Q(x,y),
\qquad x,y\in\X.
\]
For finite-valued random variables $U,V$ under this law, their
mutual information is
\[
I_Q(U;V)
:=
\sum_{u,v}
\mathbb{P}_Q(U=u,V=v)
\log
\frac{\mathbb{P}_Q(U=u,V=v)}
{\mathbb{P}_Q(U=u)\mathbb{P}_Q(V=v)},
\]
where terms with zero joint probability are taken to be zero.
Thus the subscript $Q$ indicates that all probabilities are
computed under $\mathbb{P}_Q$.

Since $Q\in\mathcal{S}(\pi)$, both $X_0$ and $X_1$ have marginal
distribution $\pi$. Consequently,
\[
I_Q(X_0;X_1)
=
\sum_{x,y\in\X}\pi(x)Q(x,y)
\log\frac{Q(x,y)}{\pi(y)}
=
D_{\mathrm{KL}}^\pi(Q\|\Pi).
\]
For $t\in\{0,1\}$, let $B_t:=i$ when $X_t\in\mathcal{O}_i$.
Both block labels have marginal distribution $\overline\pi$.
Moreover, the condition $Q\in\mathcal{F}(P)$ gives
\[
\mathbb{P}_Q(B_0=i,B_1=j)
=
\sum_{\substack{x\in\mathcal{O}_i\\y\in\mathcal{O}_j}}
\pi(x)Q(x,y)
=
\overline\pi(i)\overline P(i,j).
\]
Therefore,
\[
I_Q(B_0;B_1)
=
\sum_{i,j=1}^k
\overline\pi(i)\overline P(i,j)
\log\frac{\overline P(i,j)}{\overline\pi(j)}
=
D_{\mathrm{KL}}^{\overline\pi}
(\overline P\|\overline\Pi).
\]
Consequently,
\begin{equation*}
D_{\mathrm{KL}}^\pi(Q\|GPG)
=
I_Q(X_0;X_1)-I_Q(B_0;B_1).
\end{equation*}
Thus $GPG$ retains exactly the dependence prescribed by the block flows. Its stationary joint law factors as, using \eqref{eq:GPG-block-form} again,
\begin{equation*}
\pi(x)(GPG)(x,y)
=
\overline\pi(i)\overline P(i,j)
\frac{\pi(x)}{\overline\pi(i)}
\frac{\pi(y)}{\overline\pi(j)},
\qquad x\in\mathcal{O}_i,\quad y\in\mathcal{O}_j.
\end{equation*}
Conditional on a pair of block labels with positive probability, the two states are therefore independent draws from the corresponding conditional distributions of $\pi$. In particular, taking $Q=P$ in \eqref{eq:entropy-rate-gap} identifies the entropy gain $H(GPG)-H(P)$ with the information projection distance $D_{\mathrm{KL}}^\pi(P\|GPG)$.

\begin{remark}[Choosing a partition by entropy rate]\label{rem:entropy-partition}
When $G$ varies over kernels induced by $k$ nonempty blocks, \eqref{eq:entropy-rate-coarse-KL} shows that maximizing $H(GPG)$ is equivalent to minimizing
\[
D_{\mathrm{KL}}^{\overline\pi}(\overline P\|\overline\Pi)
=
\sum_{i,j=1}^k
\overline\pi(i)\overline P(i,j)
\log\frac{\overline P(i,j)}{\overline\pi(j)}.
\]
Both $\overline\pi$ and $\overline P$ depend on the partition. Since by Proposition \ref{prop:divergence-reduction} we have
$H(GPG)=H(\pi)+H(\overline P)-H(\overline\pi)$, the term $H(\overline\pi)$ must be included when block masses vary. The upper bound $H(GPG)\le H(\pi)$ is attained precisely when $\overline P=\overline\Pi$, equivalently $GPG=\Pi$.

\end{remark}

\subsection{Coarsening and hierarchical decompositions}\label{subsec:coarsening}

Coarsening a partition averages over larger blocks. The resulting Gibbs projections are nested, so the Pythagorean identity separates the divergence removed by each coarsening step from the divergence that remains.

\begin{prop}[Monotonicity under coarsening]\label{prop:coarsening}
	Let $P\in\mathcal S(\pi)$, and let $\mathcal O'$ be coarser than
	$\mathcal O$, meaning that every block of $\mathcal O$ is contained
	in a block of $\mathcal O'$. Denote their Gibbs kernels by $G'$ and
	$G$, respectively. Then
	\begin{equation}\label{eq:nested-Gibbs}
		G'G=GG'=G',
		\qquad
		G'PG'=G'(GPG)G'.
	\end{equation}
	
	Let $\phi:[0,\infty)\to\mathbb R$ be convex and continuous on
	$[0,\infty)$, and continuously differentiable on $(0,\infty)$.
	With the convention $B_\phi(0,0)=0$, we have
	\begin{equation}\label{eq:telescoping}
		\Dphi(P\|\Pi)
		=
		\Dphi(P\|GPG)
		+\Dphi(GPG\|G'PG')
		+\Dphi(G'PG'\|\Pi).
	\end{equation}
	In particular,
	\begin{equation}\label{eq:coarsening-divergence}
		\Dphi(G'PG'\|\Pi)
		\le
		\Dphi(GPG\|\Pi)
		\le
		\Dphi(P\|\Pi).
	\end{equation}
\end{prop}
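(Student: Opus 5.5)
We begin with the algebraic identities \eqref{eq:nested-Gibbs}. By Proposition~\ref{prop:G-projection}, $G$ and $G'$ are the $\ell^2(\pi)$-orthogonal projections onto the functions constant on the blocks of $\mathcal O$ and of $\mathcal O'$, respectively. Every block of $\mathcal O'$ is a union of blocks of $\mathcal O$. Hence the range of $G'$ is contained in the range of $G$, and so $GG'=G'$. Taking adjoints and using self-adjointness of both projections gives $G'G=(GG')^*=G'^*=G'$. One can also verify these identities directly from the formula for $Gf$: averaging over an $\mathcal O$-block and then over the enclosing $\mathcal O'$-block is the same as averaging over the $\mathcal O'$-block, because conditional expectations are nested. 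It then follows that $G'(GPG)G'=(G'G)P(GG')=G'PG'$.

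For the telescoping identity \eqref{eq:telescoping}, we apply Corollary~\ref{cor:stationarity-Pythagoras} twice.

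\textbf{Step 1.} Applying the corollary with the fine partition gives
\[
\Dphi(P\|\Pi)=\Dphi(P\|GPG)+\Dphi(GPG\|\Pi).
\]

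\textbf{Step 2.} Next apply Corollary~\ref{cor:stationarity-Pythagoras} to the kernel $GPG\in\mathcal S(\pi)$ with the coarse partition $\mathcal O'$. By \eqref{eq:nested-Gibbs}, its $G'$-projection is $G'(GPG)G'=G'PG'$, so
\[
\Dphi(GPG\|\Pi)=\Dphi(GPG\|G'PG')+\Dphi(G'PG'\|\Pi).
\]

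Substituting the identity from Step 2 into the one from Step 1 yields \eqref{eq:telescoping}. The identities hold in $[0,\infty]$. However, Theorem~\ref{thm:Bregman-projection} guarantees that $\Dphi(P\|GPG)$ and $\Dphi(GPG\|G'PG')$ are finite. Moreover, every entry of the density of $\Pi$ equals $1>0$, so no infinite boundary contributions arise for the divergences to $\Pi$. The substitution is therefore legitimate.

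The only point needing care is that the hypotheses here allow $\phi'_+(0)=-\infty$, with only the convention $B_\phi(0,0)=0$ stated. This is the same setting as Theorem~\ref{thm:Bregman-projection}. Since the competitor in each application is $Q=\Pi$, which is strictly positive, the boundary cases in that proof reduce to the harmless case in which $GPG$, or $G'PG'$, vanishes on a block pair, which forces $P$ to vanish there as well.

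Finally, the inequalities \eqref{eq:coarsening-divergence} follow from \eqref{eq:telescoping}, or from the two displayed identities above, by nonnegativity of each Bregman term. I expect no real obstacle. The main step is recognizing $G'(GPG)G'=G'PG'$, which lets Corollary~\ref{cor:stationarity-Pythagoras} be reused for the coarse partition.
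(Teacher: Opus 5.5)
Your proof is correct and follows the paper's argument. The range inclusion for the two orthogonal projections gives $G'G=GG'=G'$ and hence $G'(GPG)G'=G'PG'$; Corollary~\ref{cor:stationarity-Pythagoras} is then applied to $P$ with $G$ and to $GPG$ with $G'$, and the two identities are substituted, with your extra remarks on finiteness and boundary cases being accurate but already covered by Theorem~\ref{thm:Bregman-projection}.
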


\begin{proof}
Every function constant on the blocks of $\mathcal O'$ is constant on the blocks of $\mathcal O$. Thus the range of $G'$ is contained in the range of $G$. Since both kernels are orthogonal projections, $G'G=GG'=G'$, which also gives $G'(GPG)G'=G'PG'$.

Apply Corollary~\ref{cor:stationarity-Pythagoras} first to $P$ with Gibbs kernel $G$ and then to $GPG$ with Gibbs kernel $G'$. Using \eqref{eq:nested-Gibbs}, we obtain
\begin{align*}
\Dphi(P\|\Pi)
&=\Dphi(P\|GPG)+\Dphi(GPG\|\Pi),\\
\Dphi(GPG\|\Pi)
&=\Dphi(GPG\|G'PG')+\Dphi(G'PG'\|\Pi).
\end{align*}
Substitution proves \eqref{eq:telescoping}, and nonnegativity gives \eqref{eq:coarsening-divergence}.
\end{proof}

For reversible ergodic kernels, Corollary~\ref{cor:coarsening-all-time} below gives the corresponding squared-Frobenius comparison at every positive integer time.

The invariant families are nested as well. Define
\[
\mathcal G':=\{Q\in\mathcal S(\pi):G'QG'=Q\}.
\]
For $Q\in\mathcal G'$, identity~\eqref{eq:nested-Gibbs} gives
$GQG=GG'QG'G=G'QG'=Q$, so $\mathcal G'\subseteq\mathcal G$.
Figure~\ref{fig:coarsening} shows both the successive projections and the direct projection of $P$ onto $\mathcal G'$.

\begin{figure}[!htbp]
\centering
\begin{tikzpicture}[
  font=\small,
  divergence/.style={-{Stealth[length=1.8mm,width=1.25mm]},
    line width=0.55pt,shorten <=2pt,shorten >=2pt},
  direct/.style={divergence,draw=blue!60!black,line width=0.75pt},
  divergence label/.style={fill=white,inner sep=1.5pt},
  point/.style={circle,fill=black,inner sep=1.3pt}
]
  \node[anchor=south,inner sep=0pt,font=\normalsize] at (0.55,4.15) {$
    \begin{aligned}
      GPG&=\argmin_{T\in\mathcal G}\Dphi(P\|T)\\[0.35em]
      G'PG'&=\argmin_{T\in\mathcal G'}\Dphi(P\|T)\\[0.35em]
      G'PG'&=\argmin_{T\in\mathcal G'}\Dphi(GPG\|T)
    \end{aligned}
  $};
  \coordinate (P) at (-2.8,3.2);
  \coordinate (fine) at (-2.8,-1.6);
  \coordinate (coarse) at (0,0);
  \coordinate (Pi) at (4,0);
  \path[draw=gray!65,fill=blue!6,line width=0.45pt]
    (-4.675,-2.1)
      .. controls (-3,-2.45) and (-0.7,-2.45) .. (0.925,-2.1)
      .. controls (2.7,-1.3) and (4.35,-0.25) .. (5.825,0.7)
      .. controls (3.95,1.1) and (1.95,1.15) .. (0.225,0.7)
      .. controls (-1.3,-0.1) and (-3.3,-1.2) .. cycle;
  \node[below] at (-3.6,-2.34) {$\mathcal G$};
  \draw[gray!70,line width=0.8pt]
    (-0.75,-0.35) .. controls (-0.5,-0.27) and (-0.25,0) .. (coarse)
    .. controls (1.15,0) and (2.95,0.55) .. (Pi)
    .. controls (4.10,-0.052) and (4.20,-0.10) .. (4.30,-0.14);
  \node[right] at (4.65,-0.35) {$\mathcal G'\subseteq\mathcal G$};
  \draw[divergence] (P)
    .. controls (-3.55,1.85) and (-3.4,-0.8) ..
    node[divergence label,midway,sloped,below=5pt]
      {$\Dphi(P\|GPG)$} (fine);
  \draw[divergence] (fine)
    .. controls (-2.2,-1.15) and (0,-0.8) ..
    node[divergence label,midway,sloped,below=7pt]
      {$\Dphi(GPG\|G'PG')$} (coarse);
  \draw[direct] (coarse)
    .. controls (1.15,0) and (2.95,0.55) ..
    node[divergence label,text=blue!60!black,midway,below=6pt]
      {$\Dphi(G'PG'\|\Pi)$} (Pi);
  \draw[direct] (P)
    .. controls (-0.95,2.9) and (0,1.35) ..
    node[divergence label,text=blue!60!black,midway,sloped,below=5pt]
      {$\Dphi(P\|G'PG')$} (coarse);
  \draw[divergence] (P)
    .. controls (-0.3,3.6) and (2.9,2) ..
    node[divergence label,midway,sloped,above=5pt]
      {$\Dphi(P\|\Pi)$} (Pi);
  \draw[red!80!black,line width=0.75pt]
    (-2.98,-1.36) -- (-2.74,-1.18) -- (-2.56,-1.42);
  \draw[red!80!black,line width=0.75pt]
    (0,0.25) -- (0.25,0.25) -- (0.25,0);
  \draw[red!80!black,line width=0.75pt]
    (0,-0.25) -- (0.25,-0.25) -- (0.25,0);
  \node[point,label={above left:$P$}] at (P) {};
  \node[point,label={left:$GPG$}] at (fine) {};
  \node[point,label={[label distance=8pt]above left:$G'PG'$}] at (coarse) {};
  \node[point,label={below:$\Pi$}] at (Pi) {};
\end{tikzpicture}
\caption{Coarsening and the Pythagorean decomposition. The surface and curve schematically depict the nested invariant families $\mathcal G$ and $\mathcal G'$. The three-step path $P\to GPG\to G'PG'\to\Pi$ carries the nonnegative summands in \eqref{eq:telescoping}. The blue path $P\to G'PG'\to\Pi$ shows the direct projection onto $\mathcal G'$. The divergence on the direct arrow from $P$ to $\Pi$ equals the sum along either path. The curved arrows are schematic, and their labels give endpoint divergences rather than arc lengths. The red right-angle marks represent the Bregman Pythagorean identities. The three minimization formulas are stated for strictly convex $\phi$.}
\label{fig:coarsening}
\end{figure}

More generally, let $\mathcal O_0,\ldots,\mathcal O_{m+1}$ be successively coarser partitions, with $\mathcal O_0$ the singleton partition and $\mathcal O_{m+1}=\{\X\}$. Let $G_i$ be the Gibbs kernel of $\mathcal O_i$. Then $G_0=I$, $G_{m+1}=\Pi$, and repeated application of \eqref{eq:telescoping} gives
\begin{equation*}
\Dphi(P\|\Pi)
=\sum_{i=0}^{m}\Dphi(G_iPG_i\|G_{i+1}PG_{i+1}).
\end{equation*}
Each nonnegative summand measures the divergence removed at one level of the hierarchy. For strictly convex $\phi$, that summand vanishes exactly when $G_iPG_i=G_{i+1}PG_{i+1}$. 


\section{Frobenius comparison at every time}\label{sec:all-time}

The Pythagorean identity compares $P$ and $GPG$ after one transition. For reversible $P$, singular-value compression gives a comparison at every positive integer time. Reversibility is used to express the squared Frobenius norm of a power as a sum of powers of singular values.

\begin{theorem}\label{thm:all-time}
Let $P\in\mathcal{S}(\pi)$ be ergodic and reversible, and let $G$ be induced by a partition into $k$ nonempty blocks, where $k\in\llbracket2,N\rrbracket$. Denote the singular values of $P$ restricted to $\ell^2_0(\pi)$ by
\[
1>\sigma_1\ge\sigma_2\ge\cdots\ge\sigma_{N-1}\ge0.
\]
Then $GPG$ is ergodic and reversible. For every positive integer $t$,
\begin{equation}\label{eq:all-time}
\Fpi{(GPG)^t-\Pi}^2
\le\sum_{j=1}^{k-1}\sigma_j^{2t}
\le\sum_{j=1}^{N-1}\sigma_j^{2t}
=\Fpi{P^t-\Pi}^2.
\end{equation}
\end{theorem}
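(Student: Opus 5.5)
We work on $\ell^2_0(\pi)$ throughout. Since $P$ is reversible, it is self-adjoint in $\ell^2(\pi)$. Its restriction to $\ell^2_0(\pi)$ has real eigenvalues $\lambda_1,\dots,\lambda_{N-1}$ with $|\lambda_j|<1$ by ergodicity, and the singular values are $\sigma_j=|\lambda_j|$ after reordering. Because $P^t-\Pi$ vanishes on constants and equals $P^t$ on $\ell^2_0(\pi)$, the Hilbert--Schmidt identity gives $\Fpi{P^t-\Pi}^2=\sum_j\lambda_j^{2t}=\sum_j\sigma_j^{2t}$. This proves the final equality. The middle inequality is trivial.

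For the structural claims, $(GPG)^*=G^*P^*G^*=GPG$ by Proposition~\ref{prop:G-projection}, so $GPG$ is reversible. Write $A:=GPG-\Pi=(G-\Pi)P(G-\Pi)$ on $\ell^2_0(\pi)$, using $G\Pi=\Pi G=\Pi$ and $P\Pi=\Pi P=\Pi$. Then $(GPG)^t-\Pi=A^t$ for $t\ge1$, since $A\Pi=\Pi A=0$ and $GPG\,\Pi=\Pi$. Set $E:=G-\Pi$, which by Proposition~\ref{prop:G-projection} is an orthogonal projection of rank $k-1$ on $\ell^2_0(\pi)$. Then $A=EPE$ is a compression of $P|_{\ell^2_0}$ to the $(k-1)$-dimensional range of $E$, and it vanishes on the complement. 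By Cauchy interlacing (or Courant--Fischer), the eigenvalues $\mu_1,\dots,\mu_{k-1}$ of $EPE|_{\mathrm{ran}E}$ lie in $[\lambda_{\min},\lambda_{\max}]\subset(-1,1)$. Hence $\opnorm{A}<1$ on $\ell^2_0(\pi)$, and $(GPG)^t\to\Pi$. This gives ergodicity: irreducibility and aperiodicity of a finite chain follow from convergence of $(GPG)^t$ to $\Pi$ with $\pi>0$, since some power is then entrywise positive.

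The main inequality is $\Fpi{A^t}^2=\sum_{i=1}^{k-1}\mu_i^{2t}\le\sum_{j=1}^{k-1}\sigma_j^{2t}$. This is the step I expect to require care, since interlacing alone bounds each $\mu_i$ by the extreme eigenvalues, not the top $k-1$ absolute values. I would use the singular-value majorization for compressions: $s_i(EPE)\le\opnorm{E}\,s_i(P)\,\opnorm{E}=\sigma_i$ for each $i$. This is the standard inequality $s_i(XYZ)\le\|X\|\,s_i(Y)\,\|Z\|$, which follows from the minimax characterization $s_i(Y)=\min_{\mathrm{rank}\,F<i}\|Y-F\|$. Because $A$ is self-adjoint, its singular values are $|\mu_i|$, giving $|\mu_i|\le\sigma_i$ after ordering. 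The remaining $N-1-(k-1)$ singular values of $A$ are zero. Raising to the power $2t$ and summing yields $\sum_i|\mu_i|^{2t}\le\sum_{j\le k-1}\sigma_j^{2t}$. The left side equals $\Fpi{A^t}^2$ by self-adjointness of $A$ and the same Hilbert--Schmidt identity used for $P$.
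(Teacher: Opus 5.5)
Your proof is correct and follows essentially the same route as the paper. The key step in both is the singular-value compression $s_i(EPE)\le s_i(P|_{\ell^2_0(\pi)})=\sigma_i$, obtained from the rank-approximation characterization; self-adjointness then turns $\Fpi{(GPG)^t-\Pi}^2$ into a sum of $2t$-th powers of at most $k-1$ nonzero singular values. The differences are cosmetic. You work with $E=G-\Pi$ on $\ell^2_0(\pi)$, whereas the paper works with $G(P-\Pi)G$ on $\ell^2(\pi)$; either way, removing the constants is what makes the indices line up. You also get $\opnorm{A}<1$ by interlacing, but this already follows from the $i=1$ case of your compression inequality.
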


\begin{proof}
The matrices $P-\Pi$ and $GPG-\Pi=G(P-\Pi)G$ are self-adjoint. Since $G-\Pi$ has rank $k-1$,
\[
\rank(GPG-\Pi)\le k-1.
\]
We compare their singular values directly. For a matrix $A$ acting on $\ell^2(\pi)$, write $s_1(A)\ge\cdots\ge s_N(A)\ge0$ for its singular values, counted with multiplicity and including zeros. The spectral-norm Eckart--Young theorem \citep[Theorem~2.4.8, p.~79]{GolubVanLoan2013}, applied in an orthonormal basis of $\ell^2(\pi)$, gives the rank-approximation formula
\begin{equation}\label{eq:singular-rank-approximation}
s_j(A)=\inf_{\rank(C)<j}\opnorm{A-C},
\qquad j\in\llbracket1,N\rrbracket.
\end{equation}
To prove it, truncate the singular-value decomposition after $j-1$ terms. The resulting matrix has rank less than $j$ and approximation error $s_j(A)$. Conversely, if $\rank(C)<j$, there is a unit vector $f$ in the span of the first $j$ right singular vectors such that $Cf=0$. Thus
\[
\opnorm{A-C}\ge\norm{(A-C)f}_{\pi}
=\norm{Af}_{\pi}\ge s_j(A),
\]
which proves the reverse inequality in \eqref{eq:singular-rank-approximation}.

Since $\rank(GCG)<j$ whenever $\rank(C)<j$, and $G$ is an orthogonal projection, \eqref{eq:singular-rank-approximation} gives
\begin{align*}
s_j(GPG-\Pi)
&\le\inf_{\rank(C)<j}\opnorm{G(P-\Pi)G-GCG}\\
&\le\inf_{\rank(C)<j}\opnorm{P-\Pi-C}
=s_j(P-\Pi).
\end{align*}
We have therefore proved the singular-value compression inequality
\begin{equation}\label{eq:singular-compression}
s_j(GPG-\Pi)\le s_j(P-\Pi),
\qquad j\in\llbracket1,N\rrbracket.
\end{equation}
For $j\in\llbracket1,N-1\rrbracket$, we have $s_j(P-\Pi)=\sigma_j$, while $s_N(P-\Pi)=0$.

For $K\in\mathcal{S}(\pi)$, $K^t-\Pi=(K-\Pi)^t$ when $t\ge1$. Self-adjointness therefore gives
\begin{align*}
\Fpi{(GPG)^t-\Pi}^2
&=\sum_{j=1}^{k-1}s_j(GPG-\Pi)^{2t}\\
&\le\sum_{j=1}^{k-1}\sigma_j^{2t}
\le\sum_{j=1}^{N-1}\sigma_j^{2t}
=\Fpi{P^t-\Pi}^2.
\end{align*}
Finally,
\[
\opnorm{GPG-\Pi}\le\opnorm{P-\Pi}=\sigma_1<1.
\]
Hence $(GPG)^t\to\Pi$. Since every entry of $\Pi$ is positive, a sufficiently large power of $GPG$ has all entries positive, proving ergodicity. Reversibility follows from $(GPG)^*=GPG$.
\end{proof}

\begin{corollary}[Coarsening at every time]\label{cor:coarsening-all-time}
Let $P\in\mathcal S(\pi)$ be reversible and ergodic. Let $\mathcal O'$ be coarser than $\mathcal O$, with Gibbs kernels $G'$ and $G$, respectively. Then, for every positive integer $t$,
\begin{equation*}
\Fpi{(G'PG')^t-\Pi}^2
\le\Fpi{(GPG)^t-\Pi}^2
\le\Fpi{P^t-\Pi}^2.
\end{equation*}
\end{corollary}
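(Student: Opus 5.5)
The plan is to obtain both inequalities from Theorem~\ref{thm:all-time}, applying it twice.

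The right inequality is exactly \eqref{eq:all-time} for the partition $\mathcal O$. If $\mathcal O$ has $N$ blocks, then $G=I$ and the inequality is an equality, so this degenerate case is trivial.

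For the left inequality, I will apply Theorem~\ref{thm:all-time} to the baseline $GPG$ in place of $P$, with the coarser Gibbs kernel $G'$.

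\textbf{Hypotheses.} Theorem~\ref{thm:all-time} already asserts that $GPG$ is ergodic and reversible, and it lies in $\mathcal S(\pi)$. When $G=I$ this is immediate. So $GPG$ is an admissible baseline for a second application.

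\textbf{Averaging $GPG$ with $G'$.} By \eqref{eq:nested-Gibbs} in Proposition~\ref{prop:coarsening}, averaging $GPG$ with $G'$ gives $G'(GPG)G'=G'PG'$. The theorem then yields
\[
\Fpi{(G'PG')^t-\Pi}^2\le\Fpi{(GPG)^t-\Pi}^2
\]
for every positive integer $t$. Chaining this with the first inequality proves the corollary.

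\textbf{Degenerate case $\mathcal O'=\{\X\}$.} This case has $k'=1$, which lies outside the range $k\in\llbracket2,N\rrbracket$ of Theorem~\ref{thm:all-time}. Here $G'=\Pi$, so $G'PG'=\Pi P\Pi=\Pi$ and the left-hand side vanishes. The inequality is then trivial.

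\textbf{Coarser partition equal to $\mathcal O$.} If $\mathcal O'=\mathcal O$, equality holds. The theorem also covers this case, since it applies with $G'=G$ and $GGPGG=GPG$.

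\textbf{Main obstacle.} The only real point is checking that the hypotheses are preserved: reversibility and ergodicity of $GPG$, and the block-count range. These are handled by the conclusions of Theorem~\ref{thm:all-time} itself together with the trivial edge cases above.
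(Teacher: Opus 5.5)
Your argument matches the paper's: Theorem~\ref{thm:all-time} gives the right inequality together with reversibility and ergodicity of $GPG$, and a second application to $GPG$ with $G'$, via $G'(GPG)G'=G'PG'$ from \eqref{eq:nested-Gibbs}, gives the left inequality, with $G'=\Pi$ handled separately. One small correction: the case of the right inequality outside the theorem's range is $G=\Pi$ (one block), not $G=I$ (which $k=N$ already covers), and there $GPG=\Pi$ makes the bound immediate.
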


\begin{proof}
The second inequality follows from Theorem~\ref{thm:all-time} when $G\ne\Pi$ and is immediate when $G=\Pi$. If $G'=\Pi$, the first inequality is also immediate. Otherwise, both partitions have at least two blocks, and Theorem~\ref{thm:all-time} shows that $GPG$ is reversible and ergodic. Apply that theorem to $GPG$ with Gibbs kernel $G'$, using $G'(GPG)G'=G'PG'$ from \eqref{eq:nested-Gibbs}, to obtain the first inequality.
\end{proof}

\begin{corollary}[Strict improvement and equality]\label{cor:strict-F-improvement}
Under the assumptions of Theorem~\ref{thm:all-time}, equality
\[
\Fpi{(GPG)^t-\Pi}^2=\Fpi{P^t-\Pi}^2
\]
holds for some positive integer $t$ if and only if $P=GPG$. Consequently, if $P\ne GPG$, then
\begin{equation}\label{eq:strict-F-improvement}
\Fpi{(GPG)^t-\Pi}^2<\Fpi{P^t-\Pi}^2
\qquad\text{for every positive integer }t.
\end{equation}
\end{corollary}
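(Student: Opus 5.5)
The easy direction: if $P=GPG$, equality holds trivially at every $t$. For the converse, I will assume equality at some positive integer $t$ and force every inequality in the chain \eqref{eq:all-time} of Theorem~\ref{thm:all-time} to be an equality. Since $\sigma_1<1$ and $t\ge1$, the middle inequality $\sum_{j\le k-1}\sigma_j^{2t}\le\sum_{j\le N-1}\sigma_j^{2t}$ is an equality only if $\sigma_j=0$ for all $j\ge k$. Thus $\rank(P-\Pi)\le k-1$. The first inequality, combined with the termwise compression \eqref{eq:singular-compression}, then forces
\[
s_j(GPG-\Pi)=s_j(P-\Pi)\qquad\text{for every } j\in\llbracket1,k-1\rrbracket,
\]
because each term satisfies $s_j(GPG-\Pi)^{2t}\le\sigma_j^{2t}$ and $x\mapsto x^{2t}$ is strictly increasing on $[0,\infty)$. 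Together with the vanishing of the remaining singular values, this gives
\[
\Fpi{GPG-\Pi}^2=\sum_j s_j(GPG-\Pi)^2=\sum_j s_j(P-\Pi)^2=\Fpi{P-\Pi}^2.
\]
Note that the case $t=1$ is used here even if equality was assumed only at some larger $t$, since equality of all singular values is a statement independent of $t$.

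Next I will invoke the quadratic Pythagorean identity, which is Corollary~\ref{cor:stationarity-Pythagoras} with $\phi(u)=(u-1)^2$, or equivalently \eqref{eq:F-Pythagoras} with $Q=\Pi$:
\[
\Fpi{P-\Pi}^2=\Fpi{P-GPG}^2+\Fpi{GPG-\Pi}^2 .
\]
Equality of the two norms then gives $\Fpi{P-GPG}^2=0$. Since $\pi$ has full support, $\Fpi{\cdot}$ is a genuine norm, so $P=GPG$. The strict inequality \eqref{eq:strict-F-improvement} is the contrapositive, combined with the non-strict inequality of Theorem~\ref{thm:all-time}.

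The main obstacle is the bookkeeping in the equality analysis. I need to check that the squared Frobenius norm of $(GPG)^t-\Pi=(GPG-\Pi)^t$ really equals $\sum_j s_j(GPG-\Pi)^{2t}$. This uses self-adjointness: the singular values of a self-adjoint operator are the absolute values of its eigenvalues, so those of its $t$-th power are their $t$-th powers. I also need to confirm that the sum in \eqref{eq:all-time} runs only to $k-1$ because $\rank(GPG-\Pi)\le k-1$. Once the singular values are matched exactly, the Pythagorean step is immediate. Reversibility of $P$ is used only through Theorem~\ref{thm:all-time}.
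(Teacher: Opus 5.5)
Your proposal is correct and follows the paper's proof. Both use termwise singular-value compression \eqref{eq:singular-compression} and strict monotonicity of $u\mapsto u^{2t}$ to force $s_j(GPG-\Pi)=s_j(P-\Pi)$ for every $j$, then apply the quadratic Pythagorean identity \eqref{eq:F-Pythagoras} with $Q=\Pi$ to get $\Fpi{P-GPG}=0$; the paper simply does this in one pass over $j\in\llbracket1,N\rrbracket$ instead of splitting \eqref{eq:all-time} into two inequalities, and your appeal to $\sigma_1<1$ in the middle step is unnecessary, since only $\sigma_j^{2t}\ge0$ is needed.
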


\begin{proof}
Suppose equality holds at a positive integer $t$. Self-adjointness and the identities $P^t-\Pi=(P-\Pi)^t$ and $(GPG)^t-\Pi=(GPG-\Pi)^t$ give
\[
0=\Fpi{P^t-\Pi}^2-\Fpi{(GPG)^t-\Pi}^2
=\sum_{j=1}^{N}\bigl(s_j(P-\Pi)^{2t}-s_j(GPG-\Pi)^{2t}\bigr).
\]
Every summand is nonnegative by \eqref{eq:singular-compression}. Since their sum is zero, each summand is zero. The function $u\mapsto u^{2t}$ is strictly increasing on $[0,\infty)$, so
\[
s_j(P-\Pi)=s_j(GPG-\Pi),
\qquad j\in\llbracket1,N\rrbracket.
\]
Summing their squares yields $\Fpi{P-\Pi}^2=\Fpi{GPG-\Pi}^2$. Applying the quadratic Pythagorean identity \eqref{eq:F-Pythagoras} with $Q=\Pi$ now gives
\[
\Fpi{P-GPG}^2
=\Fpi{P-\Pi}^2-\Fpi{GPG-\Pi}^2=0,
\]
and hence $P=GPG$. Conversely, if $P=GPG$, equality holds at every positive integer time. Combining this characterization with Theorem~\ref{thm:all-time} proves \eqref{eq:strict-F-improvement}.
\end{proof}

Theorem~\ref{thm:all-time} allows negative eigenvalues of $P$ and imposes no compatibility condition between $P$ and the partition. In the two-block case, it reduces to
\[
\Fpi{(GPG)^t-\Pi}^2\le\sigma_1^{2t}\le\Fpi{P^t-\Pi}^2.
\]
For $k=N$, every block is a singleton, $G=I$, and equality holds. The case $k=1$ follows directly from $G=\Pi$ and $GPG=\Pi$. The proof compares the compressed operator and its powers without requiring $P$ and $G$ to commute.

For an ergodic $K\in\mathcal{S}(\pi)$ and $\varepsilon>0$, define the squared-Frobenius mixing time by
\begin{equation*}
t_{F,\pi}(K,\varepsilon)
:=\min\{t\in\mathbb Z_{\ge0}:\Fpi{K^t-\Pi}^2\le\varepsilon\}.
\end{equation*}
The squared distance is nonincreasing in $t$, since
\[
\Fpi{K^{t+1}-\Pi}
=\Fpi{K(K^t-\Pi)}\le\opnorm{K}\Fpi{K^t-\Pi}
\le\Fpi{K^t-\Pi}.
\]
At $t=0$, this distance is $\Fpi{I-\Pi}^2=N-1$.

\begin{corollary}[Mixing-time comparison]\label{cor:F-mixing-comparison}
Under the assumptions of Theorem~\ref{thm:all-time},
\begin{equation*}
t_{F,\pi}(GPG,\varepsilon)\le t_{F,\pi}(P,\varepsilon),
\qquad \varepsilon>0.
\end{equation*}
\end{corollary}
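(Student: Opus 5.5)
The plan is to derive the mixing-time comparison directly from the pointwise-in-time inequality of Theorem~\ref{thm:all-time}, handling the time $t=0$ separately because the theorem only covers positive integers. Both kernels are ergodic: $P$ by assumption, and $GPG$ by Theorem~\ref{thm:all-time}. Hence both mixing times $t_{F,\pi}(GPG,\varepsilon)$ and $t_{F,\pi}(P,\varepsilon)$ are well-defined finite integers. Finiteness holds because $\Fpi{K^t-\Pi}^2=\sum_j s_j(K-\Pi)^{2t}\to0$, given $\opnorm{K-\Pi}<1$ for both kernels, as shown in the proof of that theorem.

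Fix $\varepsilon>0$ and set $t^\star:=t_{F,\pi}(P,\varepsilon)$. We treat two cases.

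\emph{Case $t^\star\ge1$.} By definition, $\Fpi{P^{t^\star}-\Pi}^2\le\varepsilon$. Applying \eqref{eq:all-time} at the positive integer $t^\star$ gives
\[
\Fpi{(GPG)^{t^\star}-\Pi}^2\le\Fpi{P^{t^\star}-\Pi}^2\le\varepsilon.
\]
So $t^\star$ belongs to the set whose minimum defines $t_{F,\pi}(GPG,\varepsilon)$, and therefore $t_{F,\pi}(GPG,\varepsilon)\le t^\star$.

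\emph{Case $t^\star=0$.} This means $\Fpi{I-\Pi}^2=N-1\le\varepsilon$. The same quantity $N-1$ is the time-zero distance for $GPG$, since $(GPG)^0=I$. Hence $t_{F,\pi}(GPG,\varepsilon)=0$ as well.

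No real obstacle arises. The only points needing care are the $t=0$ case, which Theorem~\ref{thm:all-time} does not cover, and the existence of the minima. The monotonicity of $t\mapsto\Fpi{K^t-\Pi}^2$ noted before the corollary is not even needed, because the argument compares the two chains at the single time $t^\star$.
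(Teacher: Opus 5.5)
Your proposal is correct and follows essentially the same argument as the paper, which also notes that the two squared distances agree at $t=0$ (both equal $N-1$) and applies \eqref{eq:all-time} at every positive integer time. Your explicit case split at $t^\star=t_{F,\pi}(P,\varepsilon)$ and your check that both mixing times are finite simply spell out details the paper leaves implicit.
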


\begin{proof}
At $t=0$, the two squared distances agree. At every positive integer time, apply \eqref{eq:all-time}.
\end{proof}

The strict distance comparison in Corollary~\ref{cor:strict-F-improvement} need not make the mixing-time inequality strict, since both chains may first meet a given tolerance at the same integer time.

Thus every valid block partition gives a sampler that is no worse than a reversible baseline in the number of transitions needed to meet a squared-Frobenius tolerance. This statement does not require optimizing the partition. Its computational interpretation depends on the cost of sampling from the conditional distribution within a block. One transition of $GPG$ consists of a $G$-move, a $P$-move, and a second $G$-move. If their costs are bounded by $c_G,c_P,c_G$, respectively, a direct comparison is between
\[
(c_P+2c_G)t_{F,\pi}(GPG,\varepsilon)
\quad\text{and}\quad
c_Pt_{F,\pi}(P,\varepsilon).
\]
When the reduction in mixing time compensates for the two conditional draws, using $GPG$ improves the sampling cost. The mixing-time inequality by itself compares transition counts.

\section{Tuning the \texorpdfstring{$k$}{k}-block partition to minimize
\texorpdfstring{$\Dphi(G\|\Pi)$}{the Bregman divergence from stationarity}}
\label{sec:partition-tuning}

Fix $k\in\llbracket 2,N\rrbracket$. In this section, the partition $\mathcal{O} = (\mathcal{O}_1,\ldots,\mathcal{O}_k)$ varies over all partitions of $\X$ into $k$ nonempty blocks, and $G = G_{\mathcal{O}}$ always denotes its induced Gibbs kernel. We study the objective $\Dphi(G\|\Pi)$, which depends only on the block masses. Setting $P=I$ in \eqref{eq:stationarity-Pythagoras} gives
\begin{equation}\label{eq:partition-tuning-Pythagoras}
\Dphi(I\|\Pi)=\Dphi(I\|G)+\Dphi(G\|\Pi).
\end{equation}
Thus minimizing $\Dphi(G_\mathcal{O}\|\Pi)$ over $\mathcal{O}$ is equivalent to maximizing $\Dphi(I\|G_{\mathcal{O}})$. For a fixed partition, this objective equals the largest value of $\Dphi(GPG\|\Pi)$ over all $P\in\mathcal S(\pi)$, as the next proposition shows.

\subsection{Worst-case interpretation and block-mass representation}

\begin{prop}[Worst-case divergence over stationary baselines]\label{prop:worst-case}
Let $\phi:[0,\infty)\to\R$ be convex and continuous, and continuously differentiable on $(0,\infty)$. For every partition with induced Gibbs kernel $G$ and every $P\in\mathcal S(\pi)$,
\begin{equation}\label{eq:worst-case}
\Dphi(GPG\|\Pi)\le\Dphi(G\|\Pi).
\end{equation}
Consequently,
\[
\max_{P\in\mathcal S(\pi)}\Dphi(GPG\|\Pi)=\Dphi(G\|\Pi),
\]
with the maximum attained at $P=I$, and
\[
\Dphi(GPG\|\Pi)
\le\min\bigl\{\Dphi(P\|\Pi),\,\Dphi(G\|\Pi)\bigr\}.
\]
If $\phi$ is strictly convex, equality in \eqref{eq:worst-case} holds if and only if $\overline P$ is a permutation matrix. Equivalently, there is a permutation $\sigma$ of $\llbracket k\rrbracket$ such that
\[
P(x,\mathcal O_{\sigma(i)})=1
\quad\text{for every }x\in\mathcal O_i,\qquad
\overline\pi(\sigma(i))=\overline\pi(i),
\]
where $P(x,\mathcal O_j):=\sum_{y\in\mathcal O_j}P(x,y)$.
\end{prop}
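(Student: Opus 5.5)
The plan is to reduce both sides to the projection chain and then bound the resulting finite sum entry by entry using convexity of $\phi$ along chords from $0$.

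\textbf{Reduction.} Since adding an affine function to $\phi$ changes neither side, normalize $\phi(1)=\phi'(1)=0$. Proposition~\ref{prop:divergence-reduction} with $t=1$ gives
\[
\Dphi(GPG\|\Pi)=\sum_{i,j}\overline\pi(i)\overline\pi(j)\,\phi\!\left(\frac{M_{ij}}{\overline\pi(i)\overline\pi(j)}\right),
\qquad M_{ij}:=\overline\pi(i)\overline P(i,j).
\]
Here $M$ is a coupling of $\overline\pi$ with itself: its row sums equal $\overline\pi(i)$, and stationarity of $\overline P$ gives column sums $\overline\pi(j)$. Taking $P=I$ gives $\overline I=I_k$, because $G=GIG$ and the projection chain of the identity is the identity. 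Hence
\[
\Dphi(G\|\Pi)=\sum_{i\neq j}\overline\pi(i)\overline\pi(j)\phi(0)+\sum_i\overline\pi(i)^2\phi\bigl(1/\overline\pi(i)\bigr).
\]
The first claim is therefore a purely finite-dimensional inequality over self-couplings of $\overline\pi$. The maximum statement follows from it together with the case $P=I$. The $\min$ bound then follows by combining it with Corollary~\ref{cor:stationarity-Pythagoras}.

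\textbf{Key step: chord bound.} The coupling constraints give $0\le M_{ij}\le\min\{\overline\pi(i),\overline\pi(j)\}$, so the argument $u_{ij}:=M_{ij}/(\overline\pi(i)\overline\pi(j))$ lies in $[0,1/m_{ij}]$, where $m_{ij}:=\max\{\overline\pi(i),\overline\pi(j)\}$. On this interval, convexity of $\phi$ (continuous at $0$) gives $\phi(u)\le\phi(0)+c(m_{ij})\,u$, where
\[
c(m):=m\bigl(\phi(1/m)-\phi(0)\bigr)
\]
is the slope of the chord of $\phi$ from $0$ to $1/m$. Chord slopes from a fixed point are nondecreasing in the right endpoint, so $c$ is nonincreasing in $m$. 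Since $m_{ij}\ge\overline\pi(i)$, we get $c(m_{ij})\le c(\overline\pi(i))$. Multiplying by $\overline\pi(i)\overline\pi(j)$ and summing yields
\[
\Dphi(GPG\|\Pi)\le\phi(0)\sum_{i,j}\overline\pi(i)\overline\pi(j)+\sum_i c(\overline\pi(i))\sum_j M_{ij}.
\]
Using $\sum_jM_{ij}=\overline\pi(i)$, the right-hand side is $\phi(0)+\sum_i\overline\pi(i)^2\bigl(\phi(1/\overline\pi(i))-\phi(0)\bigr)$, which is exactly the expression for $\Dphi(G\|\Pi)$ above. This proves \eqref{eq:worst-case}.

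\textbf{Equality case (main obstacle).} This is where I expect the difficulty. Assume $\phi$ is strictly convex. Then the chord inequality is strict for $u\in(0,1/m_{ij})$, so equality forces $u_{ij}\in\{0,1/m_{ij}\}$. Moreover, on entries with $M_{ij}>0$ we need $c(m_{ij})=c(\overline\pi(i))$, and strict convexity makes $c$ strictly decreasing, so $\overline\pi(j)\le\overline\pi(i)$. Thus each positive entry equals $M_{ij}=\overline\pi(j)$: the full mass of column $j$ sits in row $i$, so each column has exactly one positive entry. Row sums then say that $\overline\pi(i)$ is the sum of the masses $\overline\pi(j)$ of the columns assigned to row $i$, each satisfying $\overline\pi(j)\le\overline\pi(i)$. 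This defines a map from columns to rows with every row nonempty, because all $\overline\pi(i)>0$. With $k$ rows and $k$ columns, the map is a bijection. Hence each row has a single column $j=\sigma(i)$ with $\overline\pi(\sigma(i))=\overline\pi(i)$, and $\overline P$ is a permutation matrix.

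Translating back, $\overline P(i,\sigma(i))=1$ means $\sum_{x\in\mathcal O_i}\pi(x)P(x,\mathcal O_{\sigma(i)})=\overline\pi(i)$. Since each $P(x,\mathcal O_{\sigma(i)})\le1$ and $\pi(x)>0$, this forces $P(x,\mathcal O_{\sigma(i)})=1$ for every $x\in\mathcal O_i$. For the converse, a permutation $\overline P$ with matching masses gives entries $u_{ij}=1/\overline\pi(i)$ on the permutation and $0$ elsewhere, which reproduces the formula for $\Dphi(G\|\Pi)$ term by term.
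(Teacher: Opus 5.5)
Your proof is correct, but it is organized differently from the paper's.

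\textbf{The paper's route.} The paper never passes to the block chain. It observes that each row of $GPG$ is a mixture of rows of $G$ with weights $(GP)(x,\cdot)$. It then applies Jensen's inequality to $B_\phi(\cdot,1)$ and collapses the weights with $\pi GP=\pi$. No normalization of $\phi$ and no appeal to Proposition~\ref{prop:divergence-reduction} is needed. The equality case falls out as ``all rows of $G$ charged by $(GP)(x,\cdot)$ coincide''. Hence $(GP)(x,\cdot)$ is carried by one block, and stationarity then makes the block map surjective.

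\textbf{How your route relates to it.} Read entry by entry at block level, the paper's Jensen step is exactly the chord bound for $B_\phi(\cdot,1)$ on $[0,1/\overline\pi(j)]$. This is valid since $\overline P(i,j)\le 1$, and the sum is collapsed by the column sums of your coupling $M$. Your argument is essentially the transposed estimate: the chord ends at $1/\overline\pi(i)$, and the row sums do the collapsing. In fact your detour through $m_{ij}$ and the monotonicity of $c$ is not needed for the inequality. The column constraint $M_{ij}\le\overline\pi(j)$ alone gives $u_{ij}\le 1/\overline\pi(i)$, so the chord on $[0,1/\overline\pi(i)]$ applies directly. The equality analysis runs the same way, since $u_{ij}=1/\overline\pi(i)$ forces $M_{ij}=\overline\pi(j)\le\overline\pi(i)$.

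\textbf{What each approach buys.} Your formulation casts the problem explicitly as maximizing a convex functional over self-couplings of $\overline\pi$. It also gives a transparent column-to-row bijection in the equality case. Finally, in your normalization it yields the intermediate bound $\Dphi(GPG\|\Pi)\le\phi(0)+\sum_{i,j}c(m_{ij})M_{ij}$. This can be strictly smaller than $\Dphi(G\|\Pi)$ when some block sends flow to a heavier block. The paper's route is shorter and works on the full-space kernels directly.

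\textbf{A small point in the converse.} You assume matching masses. If you start only from ``$\overline P$ is a permutation matrix'', you should note that $\overline\pi(\sigma(i))=\overline\pi(i)$ is not an extra hypothesis: it follows from $\overline\pi\,\overline P=\overline\pi$.
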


\begin{proof}
Each row of $GPG$ is a mixture of rows of $G$:
\[
(GPG)(x,y)=\sum_{z\in\X}(GP)(x,z)G(z,y).
\]
The function $B_\phi(\,\cdot\,,1)$ is convex. Jensen's inequality and the stationarity identity $\pi GP=\pi$ therefore give
\begin{align*}
\Dphi(GPG\|\Pi)
&\le\sum_{x,z\in\X}\pi(x)(GP)(x,z)
     \sum_{y\in\X}\pi(y)
     B_\phi\!\left(\frac{G(z,y)}{\pi(y)},1\right)\\
&=\sum_{z,y\in\X}\pi(z)\pi(y)
     B_\phi\!\left(\frac{G(z,y)}{\pi(y)},1\right)
=\Dphi(G\|\Pi).
\end{align*}
This proves \eqref{eq:worst-case}. Since $GIG=G$, the upper bound is attained at $P=I$. Combining it with Corollary~\ref{cor:stationarity-Pythagoras} proves the bound by the minimum of the two divergences.

Suppose now that $\phi$ is strictly convex. Since every $\pi(x)$ is positive, equality holds if and only if, for each $x$, all rows $G(z,\cdot)$ with $(GP)(x,z)>0$ coincide. Two rows of $G$ coincide exactly when their states belong to the same block. Thus $(GP)(x,\cdot)$ must be supported on one block. For $x\in\mathcal O_i$,
\[
(GP)(x,\cdot)
=\sum_{u\in\mathcal O_i}
\frac{\pi(u)}{\overline\pi(i)}P(u,\cdot),
\]
and every coefficient is positive. Hence equality is equivalent to the existence of a block index $\sigma(i)$ such that
$P(u,\mathcal O_{\sigma(i)})=1$ for all $u\in\mathcal O_i$. In other words, every row of $\overline P$ has a single entry equal to one. Stationarity gives
\[
\overline\pi(j)=\sum_{\{i:\sigma(i)=j\}}\overline\pi(i)>0,
\]
so every block has a preimage. A surjective map from $\llbracket k\rrbracket$ to itself is a permutation, and the same identity then gives
$\overline\pi(\sigma(i))=\overline\pi(i)$. Conversely, this deterministic block transition makes every row mixture involve identical rows of $G$, so equality holds.
\end{proof}

Proposition~\ref{prop:worst-case} gives a minimax interpretation of the partition objective:
\[
\min_{\substack{\mathcal O=(\mathcal O_1,\ldots,\mathcal O_k)\\\mathcal O_i\neq\emptyset,\ i\in\llbracket k\rrbracket}}\Dphi(G_{\mathcal O}\|\Pi)
=
\min_{\substack{\mathcal O=(\mathcal O_1,\ldots,\mathcal O_k)\\\mathcal O_i\neq\emptyset,\ i\in\llbracket k\rrbracket}}\max_{P\in\mathcal S(\pi)}
\Dphi(G_{\mathcal O}PG_{\mathcal O}\|\Pi).
\]
Thus choosing a partition to minimize $\Dphi(G_{\mathcal O}\|\Pi)$ minimizes a sharp upper bound on the divergence after averaging that holds uniformly over all stationary baselines.


By Proposition~\ref{prop:divergence-reduction}, inequality \eqref{eq:worst-case} can also be written on the block state space as
\[
\mathcal D_\phi^{\overline\pi}(\overline P\|\overline\Pi)
\le
\mathcal D_\phi^{\overline\pi}(I_k\|\overline\Pi),
\]
where $I_k$ is the identity transition matrix on $\llbracket k\rrbracket$. For the quadratic generator, this becomes
\[
\Fpi{GPG-\Pi}^2
=\norm{\overline P-\overline\Pi}_{F,\overline\pi}^2
\le k-1.
\]
In general, the middle quantity is the sum of the squared singular values of $\overline P$ on the centered space $\ell^2_0(\overline\pi)$. If $P$ is reversible and $1,\mu_2,\ldots,\mu_k$ are the eigenvalues of $\overline P$, it equals $\sum_{j=2}^k\mu_j^2$. In that case, equality holds exactly when $|\mu_j|=1$ for every $j\in\llbracket 2,k\rrbracket$.

We next express the objective in terms of the block masses.

\begin{prop}[Block-mass representation]\label{prop:G-block-mass}
Let $\phi:[0,\infty)\to\R$ be convex and continuous, and continuously differentiable on $(0,\infty)$. For a partition $\mathcal O=(\mathcal O_1,\ldots,\mathcal O_k)$, recall that $\overline{\pi}(i)=\pi(\mathcal O_i)$ and define
\begin{equation}\label{eq:mass-profile}
h_\phi(a):=a^2\bigl[\phi(1/a)-\phi(0)\bigr]
+a\bigl[\phi(0)-\phi(1)\bigr],
\qquad 0<a\le1.
\end{equation}
Then
\begin{align}
\Dphi(G\|\Pi)
&=\sum_{i=1}^k \overline{\pi}(i)^2 B_\phi(1/\overline{\pi}(i),1)
+\left(1-\sum_{i=1}^k \overline{\pi}(i)^2\right)B_\phi(0,1)
\label{eq:G-block-Bregman}\\
&=\sum_{i=1}^k \overline{\pi}(i)^2\bigl[\phi(1/\overline{\pi}(i))-\phi(0)\bigr]
+\phi(0)-\phi(1)
=\sum_{i=1}^k h_\phi(\overline{\pi}(i)).
\label{eq:G-block-mass}
\end{align}
No affine normalization of $\phi$ is required.
\end{prop}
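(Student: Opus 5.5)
The plan is to compute $\Dphi(G\|\Pi)$ directly from the definition \eqref{eq:kernel-Bregman}, using the explicit form of $G$. Since $\Pi(x,y)/\pi(y)=1$, each summand is $\pi(x)\pi(y)B_\phi\bigl(G(x,y)/\pi(y),1\bigr)$. The density $G(x,y)/\pi(y)$ equals $1/\overline\pi(i)$ when $x,y\in\mathcal O_i$ and $0$ when $x,y$ lie in different blocks. This density is constant on block pairs, and summing $\pi(x)\pi(y)$ over $\mathcal O_i\times\mathcal O_j$ gives $\overline\pi(i)\overline\pi(j)$. Equivalently, this is Proposition~\ref{prop:divergence-reduction} at $t=1$ with $P=I$, for which $\overline P=I_k$. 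Diagonal block pairs therefore contribute $\sum_i\overline\pi(i)^2B_\phi(1/\overline\pi(i),1)$. Off-diagonal pairs contribute $B_\phi(0,1)\sum_{i\neq j}\overline\pi(i)\overline\pi(j)=B_\phi(0,1)\bigl(1-\sum_i\overline\pi(i)^2\bigr)$, because $\sum_{i,j}\overline\pi(i)\overline\pi(j)=1$. This gives \eqref{eq:G-block-Bregman}. The value $B_\phi(0,1)=\phi(0)-\phi(1)+\phi'(1)$ is finite, since $v=1>0$, so no boundary convention is needed.

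For \eqref{eq:G-block-mass}, expand $B_\phi(u,1)=\phi(u)-\phi(1)-\phi'(1)(u-1)$ and check that the $\phi'(1)$ and $\phi(1)$ terms collapse.
\begin{itemize}
\item The linear terms give $-\phi'(1)\bigl[\sum_i\overline\pi(i)^2(1/\overline\pi(i)-1)+(1-\sum_i\overline\pi(i)^2)(0-1)\bigr]$. The bracket equals $\sum_i\overline\pi(i)-\sum_i\overline\pi(i)^2-1+\sum_i\overline\pi(i)^2=0$, so these terms vanish.
\item The constant $-\phi(1)$ terms carry total weight $\sum_i\overline\pi(i)^2+1-\sum_i\overline\pi(i)^2=1$, so they contribute $-\phi(1)$.
\item The remaining terms are $\sum_i\overline\pi(i)^2\phi(1/\overline\pi(i))+\bigl(1-\sum_i\overline\pi(i)^2\bigr)\phi(0)$.
\end{itemize}
Regrouping the last item gives $\sum_i\overline\pi(i)^2[\phi(1/\overline\pi(i))-\phi(0)]+\phi(0)-\phi(1)$.

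Finally, write $\phi(0)-\phi(1)=\sum_i\overline\pi(i)[\phi(0)-\phi(1)]$, using $\sum_i\overline\pi(i)=1$. Distributing this constant over the blocks yields $\sum_i h_\phi(\overline\pi(i))$ with $h_\phi$ as in \eqref{eq:mass-profile}.

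The cancellation of the $\phi'(1)$ terms shows the result is invariant under adding affine functions to $\phi$. This justifies the claim that no normalization is required; it is also consistent with $B_\phi$ itself being affine-invariant.

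The main obstacle is the bookkeeping at the boundary value $u=0$. The off-diagonal density is $0$, so we need $\phi(0)$ finite. This holds because $\phi$ is continuous on $[0,\infty)$, and $B_\phi(0,1)$ involves only $\phi'(1)$, which exists. Everything else is routine algebra.
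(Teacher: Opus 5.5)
Your proposal is correct and follows the same route as the paper. Both arguments read off the block-constant density of $G$ to obtain \eqref{eq:G-block-Bregman}, then expand $B_\phi(\cdot,1)$ and check that the coefficient of $\phi'(1)$ vanishes, and your remaining steps (finiteness of $B_\phi(0,1)$ and distributing $\phi(0)-\phi(1)$ via $\sum_i\overline\pi(i)=1$) just spell out what the paper calls ``collecting the remaining terms.''
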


\begin{proof}
The transition density $G(x,y)/\pi(y)$ equals $1/\overline{\pi}(i)$ on $\mathcal O_i\times\mathcal O_i$ and equals zero when $x$ and $y$ belong to different blocks. The $\pi\otimes\pi$ mass of $\mathcal O_i\times\mathcal O_i$ is $\overline{\pi}(i)^2$, which gives \eqref{eq:G-block-Bregman}. On expanding the scalar Bregman divergences, the coefficient of $-\phi'(1)$ is
\[
\sum_{i=1}^k \overline{\pi}(i)^2(1/\overline{\pi}(i)-1)
-\left(1-\sum_{i=1}^k \overline{\pi}(i)^2\right)
=\sum_{i=1}^k \overline{\pi}(i)-1=0.
\]
Collecting the remaining terms proves \eqref{eq:G-block-mass}.
\end{proof}

Adding an affine function to $\phi$ leaves $h_\phi$ unchanged. In particular, the formula applies directly to unnormalized generators such as $u\log u$ and $u^p$. Applying it also to the singleton partition together with \eqref{eq:partition-tuning-Pythagoras} gives
\begin{equation}\label{eq:I-G-block-mass}
\Dphi(I\|G)
=\sum_{x\in\X}h_\phi(\pi(x))-\sum_{i=1}^k h_\phi(\overline{\pi}(i)).
\end{equation}

\subsection{Concave block costs: an exact algorithm}

Suppose that $h_\phi$ is concave on $(0,1]$. The following algorithm produces an optimal partition.
\begin{algorithm}[H]
\caption{Exact partition for concave block costs}
\label{alg:concave-partition}
\textbf{Input:} State masses $\pi(x)$, $x\in\X$, and $k\in\llbracket 2,N\rrbracket$.

\textbf{Output:} A partition $\mathcal O=(\mathcal O_1,\ldots,\mathcal O_k)$ minimizing $\Dphi(G_{\mathcal O}\|\Pi)$ when $h_\phi$ is concave.

\begin{enumerate}[label=\arabic*.,leftmargin=*]
\item Order the states as $x_1,\ldots,x_N$ so that
$\pi(x_1)\le\cdots\le\pi(x_N)$.
\item Make each of $x_1,\ldots,x_{k-1}$ a singleton block, and put all remaining states in the last block:
\begin{equation}\label{eq:concave-optimal-partition}
\mathcal O_i:=\{x_i\},\quad i\in\llbracket k-1\rrbracket,
\qquad
\mathcal O_k:=\{x_k,\ldots,x_N\}.
\end{equation}
\end{enumerate}
\end{algorithm}
Ties in Algorithm~\ref{alg:concave-partition} may be resolved arbitrarily. Sorting takes $O(N\log N)$ comparisons, and the remaining work is linear in $N$.

\begin{theorem}[Exact minimization for concave block costs]\label{thm:concave-partition}
Under the assumptions of Proposition~\ref{prop:G-block-mass}, suppose that $h_\phi$ is concave on $(0,1]$. The partition \eqref{eq:concave-optimal-partition} minimizes $\Dphi(G_{\mathcal O}\|\Pi)$ among all $k$-block partitions. Its value is
\begin{equation*}
\min_{\substack{\mathcal O=(\mathcal O_1,\ldots,\mathcal O_k)\\\mathcal O_i\neq\emptyset,\ i\in\llbracket k\rrbracket}}\Dphi(G_{\mathcal O}\|\Pi)
=\sum_{i=1}^{k-1}h_\phi(\pi(x_i))
+h_\phi\!\left(1-\sum_{i=1}^{k-1}\pi(x_i)\right).
\end{equation*}
\end{theorem}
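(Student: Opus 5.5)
By Proposition~\ref{prop:G-block-mass}, the objective equals $F(\mathcal O):=\sum_{i=1}^k h_\phi(\overline\pi(i))$, which depends only on the block masses. So the plan is to minimize a sum of a concave function over the block-mass vectors that $k$-block partitions can realize. I will do this in two moves. The first is an exchange argument showing that some optimal partition has $k-1$ singleton blocks. The second shows that those singletons should be the $k-1$ lightest states.

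\emph{Step 1: reduce to $k-1$ singletons.} The key tool is the standard concavity inequality: for $0<a\le b$, $\delta\in(0,a]$ with $b+\delta\le1$,
\[
h_\phi(a-\delta)+h_\phi(b+\delta)\le h_\phi(a)+h_\phi(b)
\qquad(\text{with }h_\phi(0):=\lim_{a\downarrow0}h_\phi(a)\text{ if }\delta=a).
\]
It holds because the pair $(a-\delta,b+\delta)$ majorizes $(a,b)$ with the same sum. To avoid boundary issues at $0$, I will only move mass $\delta=\pi(x)$ with $x\in\mathcal O_i$ and $|\mathcal O_i|\ge2$, so that $a-\delta>0$.

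Now take an optimal partition and suppose two blocks $\mathcal O_i,\mathcal O_j$ both have at least two states, with $\overline\pi(i)\le\overline\pi(j)$. Move any state $x\in\mathcal O_i$ into $\mathcal O_j$. Both blocks stay nonempty and the objective does not increase. Repeating this strictly increases the largest block's cardinality, so it terminates. At that point at most one block has size exceeding one, which means $k-1$ blocks are singletons.

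\emph{Step 2: choose the singletons.} Let the singletons be $\{y_1\},\dots,\{y_{k-1}\}$ and the large block $\mathcal O_k$ with mass $m$. Suppose some $y_\ell$ has $\pi(y_\ell)>\pi(z)$ for some $z\in\mathcal O_k$, and swap $y_\ell$ and $z$. With $a=\pi(z)<b=\pi(y_\ell)$, the objective changes from $h(b)+h(m)$ to $h(a)+h(m-a+b)$. Write $m=m'+a$, where $m'$ is the mass of $\mathcal O_k\setminus\{z\}$. The new mass vector $(a, m'+b)$ majorizes the old one $(b, m'+a)$, since both have the same sum and $m'+b$ is the largest entry. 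Karamata's inequality for concave $h$ then gives that the new value is at most the old. If $m'=0$ the two blocks are both singletons and the swap is trivial.

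Repeating such swaps reaches the partition \eqref{eq:concave-optimal-partition}, up to ties, without increasing the objective. This proves optimality, and the stated value follows from $\overline\pi(k)=1-\sum_{i<k}\pi(x_i)$.

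\emph{Main obstacle.} The delicate point is justifying the two-point majorization inequality for a concave $h_\phi$ on $(0,1]$ in exactly the configurations used. Concretely, I need that $h(a)+h(c)\le h(b)+h(d)$ whenever $a+c=b+d$ and $[b,d]\subseteq[a,c]$ with all points in $(0,1]$. I will prove this directly from the definition of concavity by writing $b$ and $d$ as convex combinations of $a$ and $c$ with complementary weights. Each move above must keep all masses in $(0,1]$, which is why I insist that every block remain nonempty.
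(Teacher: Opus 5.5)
Your route is essentially the paper's. You use the block-mass form of the objective and a two-point concavity (mass-transfer) inequality, reduce to $k-1$ singleton blocks, and then interchange a heavy singleton with a lighter state of the large block, which is exactly the paper's swap step. Starting from an optimal partition rather than an arbitrary one, and proving the two-point inequality through complementary convex combinations rather than secant slopes, are cosmetic differences. That part of your plan is correct, and your moves do keep all masses in $(0,1]$.

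The step that fails as written is the termination claim in Step~1. Moving $x$ from $\mathcal O_i$ into $\mathcal O_j$ need not increase the largest block cardinality. If $|\mathcal O_i|=5$, $|\mathcal O_j|=2$ and $\overline\pi(i)\le\overline\pi(j)$, the move lowers the maximum cardinality from $5$ to $4$. Reading \emph{largest} as heaviest does not help either: if the heaviest block is a singleton of mass exceeding $\overline\pi(j)+\pi(x)$, the move leaves it unchanged. Because the pair $(i,j)$ is arbitrary at each stage, your argument does not rule out cycling.

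The process does terminate, but you need a correct potential. For instance,
\[
\sum_{l=1}^k\overline\pi(l)^2
\]
increases by $2\pi(x)\bigl(\overline\pi(j)-\overline\pi(i)+\pi(x)\bigr)>0$ at every move. Hence no partition can recur among the finitely many partitions. Alternatively, follow the paper and use one fixed block of largest mass as the only receiver. It stays the heaviest block, so the transfer inequality applies at every move, and its cardinality strictly increases.

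Similarly, in Step~2 you should state why the swaps stop. Each swap strictly decreases the total singleton mass, which takes only finitely many values.
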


\begin{proof}
Concavity implies the transfer inequality
\begin{equation}\label{eq:concave-mass-transfer}
h_\phi(a+v)+h_\phi(b-v)
\le h_\phi(a)+h_\phi(b),
\qquad a\ge b>0,\quad 0\le v<b,\quad a+v\le1.
\end{equation}
Indeed, for $v>0$, the secant slope on $[b-v,b]$ is at least the secant slope on $[a,a+v]$. Multiplication by $v$ gives the inequality; the case $v=0$ is immediate.

Start from any $k$-block partition and designate a block of largest mass. In each of the other blocks, retain a state of smallest mass and transfer all its other states to the designated block. At each transfer, \eqref{eq:concave-mass-transfer} applies because the designated block remains largest. The objective cannot increase, and all blocks remain nonempty.

There are now $k-1$ singleton blocks and one designated block. If a singleton state $x$ satisfies $\pi(x)>\pi(y)$ for some state $y$ in the designated block, interchange these two states. If the designated block has mass $a$, the affected masses change from $(a,\pi(x))$ to $(a+\pi(x)-\pi(y),\pi(y))$. Since $a\ge\pi(x)$, another application of \eqref{eq:concave-mass-transfer} shows that the objective cannot increase. There are finitely many choices of singleton states, and each interchange strictly decreases their total mass. Hence only finitely many interchanges are possible. At termination, the singleton masses are the $k-1$ smallest state masses, up to ties. Thus the output in \eqref{eq:concave-optimal-partition} has objective no greater than that of any initial partition, proving the claim.
\end{proof}

If $h_\phi$ has a concave extension to $[0,1]$, then $S\mapsto h_\phi(\pi(S))$ is a submodular set function. This follows from the composition rule for a concave function and a modular function with nonnegative weights; see \citet[Proposition~6.1]{Bach2013}. Theorem~\ref{thm:concave-partition} therefore solves this particular submodular partition problem exactly. Its proof uses the fact that every block has the same cost function of its mass and that there are no prescribed terminals or additional block constraints.

Table~\ref{tab:concave-generators} lists generators for which the theorem applies. In every row, $h_\phi$ extends continuously to zero with $h_\phi(0):=0$ and is concave. These assertions follow by differentiating the displayed profiles. The expression in the first row is the Shannon entropy of the block distribution.

\begin{table}[htbp]
\centering
\caption{Concave block costs for admissible convex generators. Each objective is minimized by isolating the $k-1$ states of smallest $\pi$-mass. We use $0\log0:=0$.}
\label{tab:concave-generators}
\small
\renewcommand{\arraystretch}{1.55}
\begin{tabular}{@{}lll@{}}
\toprule
Generator $\phi(u)$ & Block cost $h_\phi(a)$ & $\Dphi(G\|\Pi)$\\
\midrule
$u\log u$ & $-a\log a$ & $-\displaystyle\sum_{i=1}^k \overline{\pi}(i)\log \overline{\pi}(i)=H(\overline\pi)$\\
$u^p$, $1<p<2$ & $a^{2-p}-a$ & $\displaystyle\sum_{i=1}^k \overline{\pi}(i)^{2-p}-1$\\
$-u^p$, $0<p<1$ & $a-a^{2-p}$ & $1-\displaystyle\sum_{i=1}^k \overline{\pi}(i)^{2-p}$\\
$(\sqrt u-1)^2$ & $2(a-a^{3/2})$ & $2\left(1-\displaystyle\sum_{i=1}^k \overline{\pi}(i)^{3/2}\right)$\\
\bottomrule
\end{tabular}
\end{table}

\FloatBarrier
\begin{remark*}[Tsallis entropy]
For a finite probability distribution $\nu$ with full support, its Tsallis entropy of order $q\in\R\setminus\{1\}$ is
\[
H_q(\nu):=\frac{1-\sum_x\nu(x)^q}{q-1};
\]
see \citet{Tsallis1988,Tsallis2000}. We set $H_1(\nu):=H(\nu)$ by continuity. For $\phi(u):=u^p$ with $1<p<2$, Proposition~\ref{prop:G-block-mass} gives
\[
\mathcal D_{u^p}^{\pi}(G\|\Pi)
=\sum_{i=1}^k\overline{\pi}(i)^{2-p}-1
=(p-1)H_{2-p}(\overline{\pi}).
\]
Thus minimizing this power divergence is equivalent to minimizing the Tsallis entropy of the block distribution, with order $2-p\in(0,1)$. Algorithm~\ref{alg:concave-partition} therefore minimizes this entropy over all $k$-block partitions simultaneously for every order in $(0,1)$. The generator $\phi(u):=-u^p$ in Table~\ref{tab:concave-generators} has the analogous interpretation:
$\mathcal D_{-u^p}^{\pi}(G\|\Pi)=(1-p)H_{2-p}(\overline{\pi})$ for $0<p<1$.

For each fixed partition, the Shannon entropy objective is recovered by the normalized limit
\[
\lim_{p\downarrow1}\frac{\mathcal D_{u^p}^{\pi}(G\|\Pi)}{p-1}
=H(\overline{\pi})
=\mathcal D_{u\log u}^{\pi}(G\|\Pi).
\]
Indeed, $H_q(\nu)\to H(\nu)$ as $q\to1$. The normalization by $p-1$ is essential, since the unnormalized power divergence tends to zero as $p\downarrow1$.
\end{remark*}

\subsection{Powers above two: a greedy approximation}

Let $\phi(u):=u^p$ with $p>2$. Proposition~\ref{prop:G-block-mass} gives
\begin{equation}\label{eq:inverse-power-objective}
\mathcal D_{u^p}^{\pi}(G\|\Pi)
=\sum_{i=1}^k\overline{\pi}(i)^{2-p}-1.
\end{equation}
The function $(0,1] \ni a\mapsto a^{2-p}$ is strictly convex. Jensen's inequality consequently gives
\begin{equation}\label{eq:power-balanced-bound}
\sum_{i=1}^k\overline{\pi}(i)^{2-p}\ge k^{p-1},
\qquad
\mathcal D_{u^p}^{\pi}(G\|\Pi)\ge k^{p-1}-1,
\end{equation}
with equality if and only if all block masses equal $1/k$. Equal block masses may not be attainable because the state masses cannot be split.

The sum in \eqref{eq:inverse-power-objective} is a supermodular partition cost. To make this statement precise even at the empty set, define
\begin{equation}
\psi_p(t):=
\begin{cases}
1+(p-2)(1-t),&0\le t\le1,\\
t^{2-p},&t\ge1.
\end{cases}
\end{equation}
This function is convex and continuously differentiable. Hence
\[
f_p(S):=\left(\min_{x\in\X}\pi(x)\right)^{2-p}
\psi_p\!\left(\frac{\pi(S)}{\min_{x\in\X}\pi(x)}\right),
\qquad S\subseteq\X,
\]
is supermodular. Indeed, $-\psi_p$ is concave, and $S\mapsto\pi(S)/\min_{x\in\X}\pi(x)$ is modular with nonnegative weights. Applying \citet[Proposition~6.1]{Bach2013} and multiplying by the positive factor $(\min_{x\in\X}\pi(x))^{2-p}$ shows that $-f_p$ is submodular, which is equivalent to supermodularity of $f_p$. For every nonempty $S$, $f_p(S)=\pi(S)^{2-p}$, while $f_p(\varnothing)=(p-1)\left(\min_{x\in\X}\pi(x)\right)^{2-p}$ is finite. Thus minimizing \eqref{eq:inverse-power-objective} is equivalent to minimizing $\sum_i f_p(\mathcal O_i)$ over nonempty $k$-block partitions.

A simple algorithm assigns the largest state masses first to the currently lightest block:
\begin{algorithm}[H]
\caption{Greedy partition for inverse-power costs}
\label{alg:inverse-power-greedy}
\textbf{Input:} State masses $\pi(x)$, $x\in\X$, and $k\in\llbracket 2,N\rrbracket$.

\textbf{Output:} A partition $\mathcal O=(\mathcal O_1,\ldots,\mathcal O_k)$ into $k$ nonempty blocks.

\begin{enumerate}[label=\arabic*.,leftmargin=*]
\item Order the states so that $\pi(x_1)\ge\cdots\ge\pi(x_N)$.
\item Initialize $\mathcal O_i:=\{x_i\}$ for $i\in\llbracket k\rrbracket$.
\item For $j\in\llbracket k+1,N\rrbracket$ in increasing order, put $x_j$ in a block with smallest current $\pi$-mass.
\end{enumerate}
\end{algorithm}
Ties may be resolved arbitrarily. Algorithm~\ref{alg:inverse-power-greedy} is the longest-processing-time rule from scheduling, with state masses as job sizes and blocks as machines; see \citet{AlonEtAl1998}. Sorting and maintaining a priority queue of block masses take $O(N\log N+N\log k)$ comparisons and arithmetic operations. Write $(\mathcal O_i^{\mathrm{gr}})_{i\in\llbracket k\rrbracket}$ for the output partition and $G_{\mathrm{gr}}$ for its Gibbs kernel.

Algorithm~\ref{alg:inverse-power-greedy} does not depend on $p$: it uses only the state masses, the number of blocks, and the tie-breaking rule. For a fixed tie-breaking rule, the same output partition therefore satisfies Theorem~\ref{thm:inverse-power-greedy} simultaneously for every $p>2$, with the approximation factor $\alpha_p$ varying with $p$.

\begin{theorem}[Greedy approximation for inverse-power costs]\label{thm:inverse-power-greedy}
For $p>2$, define
\begin{equation*}
\alpha_p:=
\frac{(p-2)^{p-2}(2-2^{2-p})^{p-1}}
{(p-1)^{p-1}(1-2^{2-p})^{p-2}}.
\end{equation*}
The greedy partition satisfies
\begin{equation}\label{eq:inverse-power-greedy}
\sum_{i=1}^k\pi(\mathcal O_i^{\mathrm{gr}})^{2-p}
\le\alpha_p\min_{\substack{\mathcal O=(\mathcal O_1,\ldots,\mathcal O_k)\\\mathcal O_i\neq\emptyset,\ i\in\llbracket k\rrbracket}}
\sum_{i=1}^k\overline{\pi}(i)^{2-p}.
\end{equation}
Consequently,
\begin{align}
\mathcal D_{u^p}^{\pi}(G_{\mathrm{gr}}\|\Pi)
&\le\alpha_p \min_{\substack{\mathcal O=(\mathcal O_1,\ldots,\mathcal O_k)\\\mathcal O_i\neq\emptyset,\ i\in\llbracket k\rrbracket}}\mathcal D_{u^p}^{\pi}(G_{\mathcal O}\|\Pi)+\alpha_p-1
\label{eq:power-greedy-additive}\\
&\le\left(\alpha_p+\frac{\alpha_p-1}{k^{p-1}-1}\right)\min_{\substack{\mathcal O=(\mathcal O_1,\ldots,\mathcal O_k)\\\mathcal O_i\neq\emptyset,\ i\in\llbracket k\rrbracket}}\mathcal D_{u^p}^{\pi}(G_{\mathcal O}\|\Pi).
\label{eq:power-greedy-ratio}
\end{align}
In particular, $\alpha_3=9/8$ and $\alpha_4=343/243$.
\end{theorem}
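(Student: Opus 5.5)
The plan is to reduce \eqref{eq:inverse-power-greedy} to the structural property of the longest-processing-time rule behind \citet{AlonEtAl1998}, and then solve a one-dimensional extremal problem. The extremal problem should produce exactly $\alpha_p$.

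\textbf{Step 1 (LPT load property).} Consider a greedy block containing at least two states. Its last state $x_j$ was added when that block had the smallest mass. Since masses are processed in decreasing order, $\pi(x_j)$ is at most the block's earlier mass. I then expect to show the following. If $m$ denotes the smallest final greedy block mass, every non-singleton greedy block has mass in $[m,2m]$, because its mass before the last insertion was at most the then-current minimum, which is at most $m$.

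\textbf{Step 2 (heavy singletons).} A singleton block $\{x_i\}$, $i\le k$, may be heavier than $2m$. For these heavy singletons I would run an exchange argument in the spirit of the proof of Theorem~\ref{thm:concave-partition}, now using convexity of $a\mapsto a^{2-p}$ in place of concavity. The claim to establish is that some optimal partition also isolates these heavy states: any other state in their block can be moved to a lighter block without increasing the cost. This reduces the comparison to the remaining $k'$ blocks and the remaining mass $M$. This exchange step is the one I expect to be the main obstacle. If it fails in general, I would fall back on lower-bounding the optimum directly by
\[
\sum_{\text{heavy }x}\pi(x)^{2-p}+k'^{\,p-1}M^{2-p},
\]
justifying this bound by Jensen's inequality on the non-heavy part.

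\textbf{Step 3 (extremal calculation).} On the reduced problem, the greedy masses $a_i\in[m,2m]$ sum to $M$, and the optimum is at least $k'(M/k')^{2-p}$ by \eqref{eq:power-balanced-bound}-type Jensen. The map $a\mapsto a^{2-p}$ is convex, so for a fixed mean the sum is largest when every $a_i$ sits at an endpoint $m$ or $2m$. Let a fraction $\theta$ of the blocks sit at $m$. Then $m=M/(k'(2-\theta))$, and the ratio becomes
\[
(2-\theta)^{p-2}\bigl(2^{2-p}+\theta(1-2^{2-p})\bigr).
\]
Maximizing over $\theta\in[0,1]$ by calculus should give $\alpha_p$. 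The stationary point is $2-\theta=(p-2)(2-2^{2-p})/\bigl((p-1)(1-2^{2-p})\bigr)$, and substituting it back yields the stated closed form. I will check that $\alpha_3=9/8$ and $\alpha_4=343/243$. The heavy singletons contribute the same terms to both sides, so they preserve the ratio bound.

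\textbf{Step 4 (consequences).} Write $S=D+1$ using \eqref{eq:inverse-power-objective}. Then $S_{\mathrm{gr}}\le\alpha_pS_{\mathrm{opt}}$ gives \eqref{eq:power-greedy-additive}. Finally, \eqref{eq:power-balanced-bound} gives $D_{\mathrm{opt}}\ge k^{p-1}-1$, so $\alpha_p-1\le(\alpha_p-1)D_{\mathrm{opt}}/(k^{p-1}-1)$, which yields \eqref{eq:power-greedy-ratio}.
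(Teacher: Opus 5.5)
Your outline matches the paper's proof. It bounds non-singleton greedy blocks in $[m,2m]$, removes heavy singletons, compares the chord bound with Jensen's lower bound, and solves the same one-variable maximization (your $2-\theta$ is the paper's $z=\mu/m$). Steps 1, 3 and 4 are fine.

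\textbf{The gap is Step 2.} You leave it open, yet it is the only place where the structure of the optimal partition is controlled. The exchange you sketch does not work as stated. Suppose the block of $x$ has mass $\pi(x)+v$ and the receiving block has mass $b$ with $\pi(x)<b<\pi(x)+v$. Then the move replaces $(\pi(x)+v,b)$ by the more spread pair $(\pi(x),b+v)$ with the same sum, which strictly increases the convex cost. The receiving block must be lighter than $\pi(x)$ itself, and the existence of such a block is exactly what has to be proved. Your fallback has the same hole: Jensen on the non-heavy part presupposes that, in an optimum, the non-heavy states fill $k'$ blocks disjoint from the heavy states.

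The missing idea is an averaging inequality: each heavy state's mass exceeds the average block mass of its residual instance. The paper takes this as its definition of heavy. With your $2m$ definition it holds automatically, and your definition spares you the paper's separate check that the greedy algorithm leaves such states alone.

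By Step 1, the heavy states are exactly the states of mass $>2m$. List them as $\pi(x_1)\ge\cdots\ge\pi(x_h)$. The $k'=k-h\ge1$ remaining greedy blocks lie in $[m,2m]$, so $M\le 2mk'<k'\pi(x_h)$. Hence, for $j\le h$,
\[
R_j:=\sum_{i=j}^{h}\pi(x_i)+M<(k-j+1)\pi(x_j).
\]
Now induct on $j$. Take an optimal $(k-j+1)$-block partition of the states other than $x_1,\dots,x_{j-1}$. Suppose the block of $x_j$ contains further states of total mass $v>0$. Then some other block has mass $b<\pi(x_j)$, since otherwise the total would be at least $(k-j+1)\pi(x_j)+v>R_j$. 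Moving those states into that block strictly lowers the cost, because
\[
b^{2-p}-(b+v)^{2-p}>\pi(x_j)^{2-p}-\bigl(\pi(x_j)+v\bigr)^{2-p}.
\]
The order matters. A direct argument for $x_j$ can fail while heavier states are not yet isolated, because they can inflate the total.

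Alternatively, your fallback bound is true, but it needs majorization rather than Jensen. The decreasing vector $(\pi(x_1),\dots,\pi(x_h),M/k',\dots,M/k')$ is majorized by every feasible block-mass vector. For $r\le h$, compare top-$r$ sums using the blocks that contain $x_1,\dots,x_r$. For $r\ge h$, use concavity of the sorted partial sums. Karamata's inequality then gives the bound.

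Finally, restoring the common singleton terms requires $\alpha_p\ge1$. This holds because your ratio function equals $1$ at $\theta=1$.
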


\begin{proof}
We first remove states that must be singletons. Consider any residual instance with total mass $W$ and $\ell\ge2$ blocks, and put $\mu:=W/\ell$. If a state $x$ satisfies $\pi(x)\ge\mu$, every minimizing partition makes it a singleton. Indeed, suppose its block has mass $\pi(x)+v$ with $v>0$. Some other block has mass $b<\pi(x)$. Moving the other states of total mass $v$ into that block changes the two masses to $\pi(x)$ and $b+v$, and strictly lowers the cost because
\[
b^{2-p}-(b+v)^{2-p}
>
\pi(x)^{2-p}-(\pi(x)+v)^{2-p}.
\]
The greedy algorithm also leaves this state alone: before it could assign another state to that block, all $\ell$ current block masses would be at least $\pi(x)$, so the mass already assigned would be at least $\ell\pi(x)\ge W$, although a positive mass would remain unassigned. This is impossible.

Remove such singleton states and their blocks successively. Their costs are the same for the greedy and optimal partitions, and restricting the greedy assignments to the remaining states gives the greedy algorithm for the residual instance. If one block remains, it is assigned exactly. Otherwise, we reach an instance in which every state mass is smaller than its average block mass $\mu$.

Let $m$ be the smallest final greedy block mass in this residual instance. Every nonsingleton greedy block has mass at most $2m$. To prove this, consider the last state assigned to that block. Just before this assignment, its block was lightest. Its mass was therefore at most the then-current mass of a block whose final mass is $m$, and hence at most $m$. Because the states were processed in decreasing order, the mass of this last state is at most that of a state already in the block, and hence is also at most $m$. Every singleton block has mass smaller than $\mu$. If $\mu>2m$, all final block masses would be smaller than $\mu$, contradicting their average. Therefore $\mu\le2m$, and every final greedy block mass lies in $[m,2m]$.

For $m\le a\le2m$, the chord bound for the convex function $a^{2-p}$ gives
\[
a^{2-p}\le m^{2-p}
\left[2-\frac am+\left(\frac am-1\right)2^{2-p}\right].
\]
Write $z:=\mu/m\in[1,2]$. Summing this bound over the $\ell$ residual blocks and using Jensen's lower bound $\ell\mu^{2-p}$ for their optimal cost shows that their approximation ratio is at most
\[
z^{p-2}\bigl[(2-z)+(z-1)2^{2-p}\bigr].
\]
Differentiation shows that its maximum on $[1,2]$ occurs at
\[
z=\frac{(p-2)(2-2^{2-p})}{(p-1)(1-2^{2-p})}\in(1,2)
\]
and equals $\alpha_p$. Since $\alpha_p\ge1$, restoring the common singleton costs preserves the same approximation factor. This proves \eqref{eq:inverse-power-greedy}.

Finally, subtracting one from the objective gives \eqref{eq:power-greedy-additive}. The lower bound
\[
\min_{\substack{\mathcal O=(\mathcal O_1,\ldots,\mathcal O_k)\\\mathcal O_i\neq\emptyset,\ i\in\llbracket k\rrbracket}}
\mathcal D_{u^p}^{\pi}(G_{\mathcal O}\|\Pi)\ge k^{p-1}-1
\]
from \eqref{eq:power-balanced-bound} then proves \eqref{eq:power-greedy-ratio}.
\end{proof}

For example, when $p=3$, the guarantee becomes
\begin{align*}
\mathcal D_{u^3}^{\pi}(G_{\mathrm{gr}}\|\Pi)
&\le\left(\frac98+\frac{1}{8(k^2-1)}\right)
\min_{\substack{\mathcal O=(\mathcal O_1,\ldots,\mathcal O_k)\\\mathcal O_i\neq\emptyset,\ i\in\llbracket k\rrbracket}}\mathcal D_{u^3}^{\pi}(G_{\mathcal O}\|\Pi)\\
&\le\frac76\min_{\substack{\mathcal O=(\mathcal O_1,\ldots,\mathcal O_k)\\\mathcal O_i\neq\emptyset,\ i\in\llbracket k\rrbracket}}\mathcal D_{u^3}^{\pi}(G_{\mathcal O}\|\Pi).
\end{align*}
\begin{remark*}[Tsallis entropy of negative order]
The entropy interpretation also applies when $p>2$. Since $\pi$ has full support and all blocks are nonempty, every $\overline{\pi}(i)$ is strictly positive. Hence the Tsallis entropy of order $2-p<0$ is finite, and \eqref{eq:inverse-power-objective} gives
\[
\mathcal D_{u^p}^{\pi}(G\|\Pi)
=(p-1)H_{2-p}(\overline{\pi}).
\]
Thus minimizing the power divergence is equivalent to minimizing Tsallis entropy of negative order. For $q<0$, the scalar cost $a\mapsto(a^q-a)/(1-q)$ has second derivative $-q a^{q-2}>0$. Consequently, $H_q$ is strictly convex on the positive probability simplex and is minimized by the uniform distribution; see also \citet[Section~II.A]{Tsallis2000}. In particular,
\[
H_{2-p}(\overline{\pi})
\ge\frac{k^{p-1}-1}{p-1},
\]
with equality if and only if every block has mass $1/k$. Moreover, this entropy tends to infinity as any block mass tends to zero. These properties explain why the objective favors balanced masses for $p>2$.

The multiplicative guarantee in \eqref{eq:power-greedy-ratio} also holds with $\mathcal D_{u^p}^{\pi}(G\|\Pi)$ replaced by $H_{2-p}(\overline{\pi})$, since the positive factor $p-1$ cancels. Thus Algorithm~\ref{alg:inverse-power-greedy} also approximates the minimum Tsallis entropy of negative order over nonempty $k$-block partitions.
\end{remark*}

\subsection{The quadratic case and the balance--imbalance transition}

\begin{prop}[Partition independence for the quadratic generator]\label{prop:quadratic-G-rank}
For $\phi(u):=u^2$, every nonempty $k$-block partition satisfies
\begin{equation*}
\mathcal D_{u^2}^{\pi}(G\|\Pi)=\Fpi{G-\Pi}^2=k-1,
\qquad
\mathcal D_{u^2}^{\pi}(I\|G)=N-k.
\end{equation*}
In particular, all $k$-block partitions minimize the first divergence.
\end{prop}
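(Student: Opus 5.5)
The plan is to evaluate both divergences directly from the block-mass representation in Proposition~\ref{prop:G-block-mass}, and then cross-check the first one against the spectral description of $G$ in Proposition~\ref{prop:G-projection}.

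\textbf{Step 1: identify the divergence with the Frobenius norm.} The generator $u^2$ differs from $(u-1)^2$ by an affine function. Hence $B_\phi(u,v)=(u-v)^2$, and by the discussion in Section~\ref{sec:preliminaries},
\[
\mathcal D_{u^2}^{\pi}(P\|Q)=\Fpi{P-Q}^2 .
\]
This gives the first equality, $\mathcal D_{u^2}^{\pi}(G\|\Pi)=\Fpi{G-\Pi}^2$.

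\textbf{Step 2: compute the value $k-1$.} For $\phi(u)=u^2$, the profile \eqref{eq:mass-profile} is
\[
h_\phi(a)=a^2\cdot a^{-2}+a(0-1)=1-a .
\]
Formula \eqref{eq:G-block-mass} then gives
\[
\sum_{i=1}^k\bigl(1-\overline\pi(i)\bigr)=k-1 .
\]
As a cross-check, Proposition~\ref{prop:G-projection} says that $G-\Pi$ is an orthogonal projection of rank $k-1$ on $\ell^2(\pi)$. Its squared Hilbert--Schmidt norm therefore equals its trace, which is its rank, $k-1$.

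\textbf{Step 3: compute the second divergence.} Apply Step 2 to the singleton partition, where $G=I$ and $k=N$, to get $\Fpi{I-\Pi}^2=N-1$. The Pythagorean identity \eqref{eq:partition-tuning-Pythagoras} (equivalently \eqref{eq:I-G-block-mass}) then yields
\[
\mathcal D_{u^2}^{\pi}(I\|G)=(N-1)-(k-1)=N-k .
\]

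\textbf{Step 4: the minimization claim.} The value $k-1$ does not depend on the partition, so every $k$-block partition attains the minimum.

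There is no real obstacle in this argument. The only point needing care is that $h_\phi$ is invariant under affine changes of $\phi$, so using the unnormalized generator $u^2$ in Step 2 is legitimate; Proposition~\ref{prop:G-block-mass} states explicitly that no affine normalization is required.
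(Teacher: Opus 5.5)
Your proposal is correct and follows the paper's own proof. The paper likewise computes $h_\phi(a)=1-a$ from \eqref{eq:mass-profile}, sums it to get $k-1$, obtains $N-k$ from \eqref{eq:I-G-block-mass}, and notes the equivalent reading via the orthogonal projections $G-\Pi$ and $I-G$ of ranks $k-1$ and $N-k$; your use of $G-\Pi$ as a projection on all of $\ell^2(\pi)$ is also valid, since it is self-adjoint, idempotent, and annihilates constants.
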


\begin{proof}
The profile in \eqref{eq:mass-profile} is $h_\phi(a)=1-a$, so \eqref{eq:G-block-mass} gives $k-\sum_i \overline{\pi}(i)=k-1$. Equation~\eqref{eq:I-G-block-mass} gives $(N-1)-(k-1)=N-k$. Equivalently, $G-\Pi$ and $I-G$ are orthogonal projections of ranks $k-1$ and $N-k$.
\end{proof}

For the power family $\phi(u) = u^p$, $p>1$, the entire transition follows from
\begin{equation*}
h_\phi(a)=a^{2-p}-a,
\qquad
h_\phi''(a)=(p-1)(p-2)a^{-p}.
\end{equation*}
When $1<p<2$, transferring mass from a smaller block to a larger block decreases the objective, provided neither block becomes empty. For $p>2$, the same transfer increases it. At $p=2$, the objective is unchanged. In terms of Tsallis entropy, these regimes correspond to positive, negative, and zero orders, respectively: the identity $\mathcal D_{u^p}^{\pi}(G\|\Pi)=(p-1)H_{2-p}(\overline{\pi})$ holds for every $p>1$, including $p=2$, where $H_0(\overline{\pi})=k-1$. Table~\ref{tab:power-transition} summarizes these regimes.

\begin{table}[htbp]
\centering
\caption{Balance--imbalance transition for convex power generators $\phi(u):=u^p$, $p>1$. Here $q:=2-p$ and $\mathcal D_{u^p}^{\pi}(G\|\Pi)=(p-1)H_q(\overline{\pi})$. All block masses are positive, so the entropy is finite even for $q<0$. The lower bound in the last row is $k^{p-1}-1$, attained exactly when every block has mass $1/k$.}
\label{tab:power-transition}
\small
\renewcommand{\arraystretch}{1.4}
\begin{tabular}{@{}p{0.15\textwidth}p{0.27\textwidth}p{0.50\textwidth}@{}}
\toprule
Power $p$\newline Order $q$ & $\mathcal D_{u^p}^{\pi}(G\|\Pi)$ & Minimization over nonempty $k$-block partitions\\
\midrule
$1<p<2$\newline $0<q<1$ &
$\displaystyle\sum_{i=1}^k \overline{\pi}(i)^{2-p}-1$ &
Minimizes concave Tsallis entropy of positive order; favors imbalance. Isolating the $k-1$ lightest states is exactly optimal.\\
$p=2$\newline $q=0$ &
$k-1=H_0(\overline{\pi})$ &
The order-zero entropy is independent of the positive block masses. Every partition is optimal.\\
$p>2$\newline $q<0$ &
$\displaystyle\sum_{i=1}^k \overline{\pi}(i)^{-(p-2)}-1$ &
Minimizes strictly convex Tsallis entropy of negative order; favors balance. Equal masses are optimal when feasible. Otherwise, the greedy guarantee applies.\\
\bottomrule
\end{tabular}
\end{table}

For $p>2$, favoring balance does not specify a single optimal discrete partition for every $p$: feasible mass vectors need not be ordered by transfers of mass from smaller to larger blocks. For $p=1$, the generator is affine and the divergence is identically zero. The raw power $u^p$ is not a convex generator when $0<p<1$; the convex choice $-u^p$ is included in Table~\ref{tab:concave-generators}.

\section{Numerical experiment}\label{sec:numerical}

The script used
for the experiments and all figures is available at
\url{https://github.com/mchchoi/bregman-projections}.

We compare a lazy Metropolis sampler with two Gibbs-projected versions on a cycle with $N=100$ states. Let $\X:=\llbracket 0,99\rrbracket$, with addition modulo $100$, and set
\[
H(x):=-\cos\!\left(\frac{8\pi x}{100}\right)
       -0.15\cos\!\left(\frac{2\pi x}{100}\right),
\qquad
\pi(x):=\frac{e^{-2H(x)}}{\sum_{z\in\X}e^{-2H(z)}}.
\]
This Gibbs distribution has four wells at inverse temperature $2$. The baseline kernel proposes a neighboring state and accepts according to the Metropolis rule:
\[
P(x,x\pm1):=\frac14\min\!\left\{1,\frac{\pi(x\pm1)}{\pi(x)}\right\},
\qquad
P(x,x):=1-P(x,x-1)-P(x,x+1),
\]
with all other entries zero. Thus $P$ is reversible and ergodic, and its holding probability is at least $1/2$.

For $k=4$, let $G_1$ be induced by Algorithm~\ref{alg:concave-partition} for $\phi(u):=u^{1.5}$, and let $G_2$ be induced by Algorithm~\ref{alg:inverse-power-greedy} for $\phi(u):=u^3$. Ties are resolved by increasing state index, and then by increasing block index. The partition for $G_1$ consists of the singleton blocks $\{38\}$, $\{62\}$, $\{37\}$, and their complement. Its largest block has $97$ states and mass approximately $0.9985719$. In contrast, the greedy partition has $25$ states in each block, with masses
\[
(0.25000802,\ 0.25000520,\ 0.24999398,\ 0.24999280).
\]
The corresponding objective values are
\[
\mathcal D_{u^{1.5}}^\pi(G_1\|\Pi)\approx0.06473985,
\qquad
\mathcal D_{u^3}^\pi(G_2\|\Pi)\approx15.0000000115.
\]
The first value is optimal by Theorem~\ref{thm:concave-partition}; the second is within $1.2\times10^{-8}$ of the lower bound $4^2-1=15$ in \eqref{eq:power-balanced-bound}.

\begin{figure}[htbp]
\centering
\includegraphics[width=\textwidth]{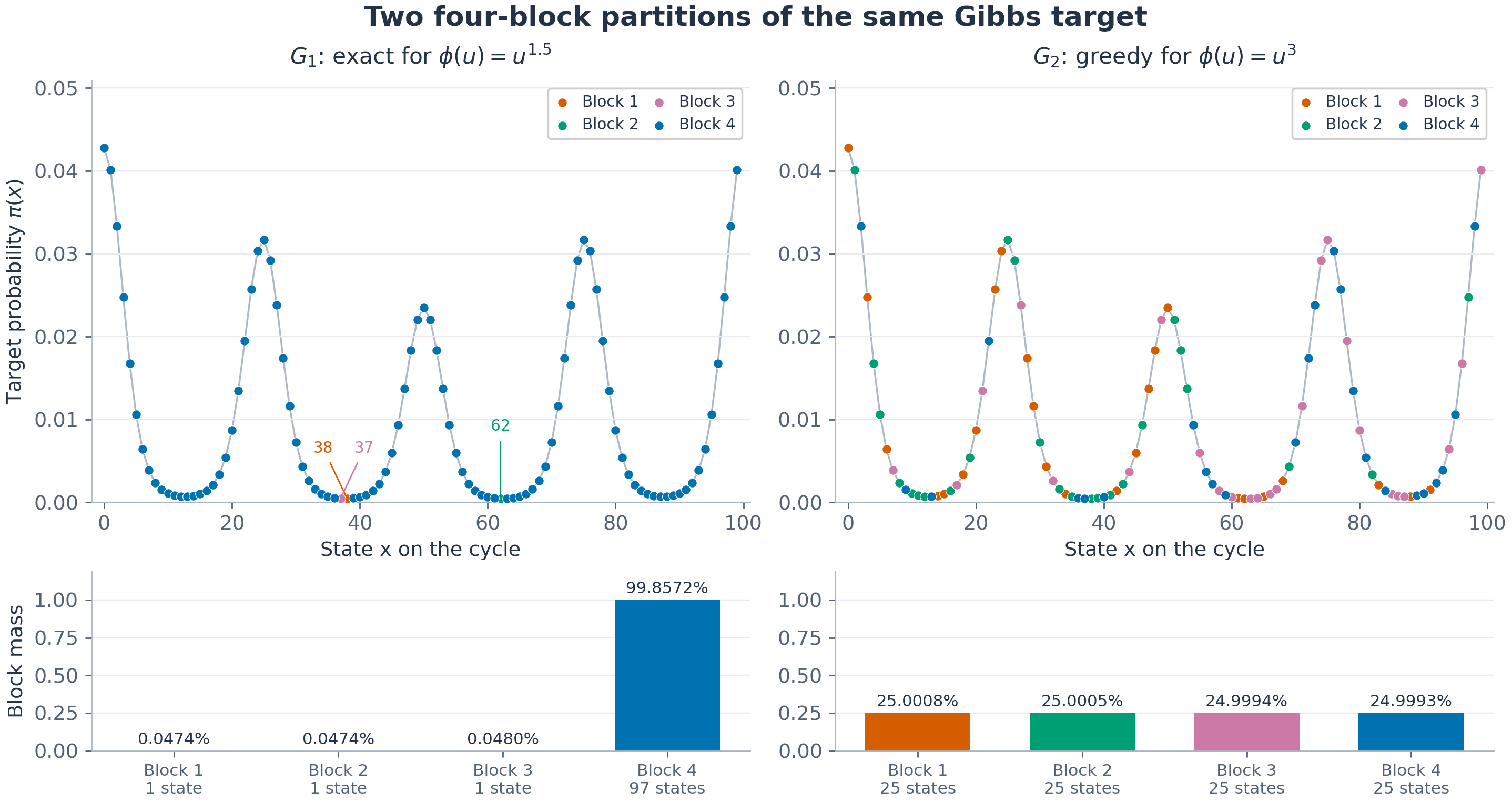}
\caption{The two partitions of the same target distribution. In the upper panels, colors identify block membership; the three singleton states for $G_1$ are annotated. The lower panels show block masses and numbers of states. Block colors are local to each partition.}
\label{fig:numerical-partitions}
\end{figure}

Figure~\ref{fig:numerical-partitions} displays the balance--imbalance transition from Section~\ref{sec:partition-tuning}: the concave cost isolates rare states, whereas the inverse-power cost favors equal block masses.

For each $Q\in\{P,G_1PG_1,G_2PG_2\}$, we compute the worst-case total variation distance and the squared-Frobenius distance,
\[
\frac12\max_{x\in\X}\sum_{y\in\X}|Q^t(x,y)-\pi(y)|,
\qquad
\Fpi{Q^t-\Pi}^2.
\]
We use reversible spectral decompositions in double precision, checked against direct powers and the four-state projection chains. No trajectory sampling is involved. At time zero, all three powers equal $I$ and their squared-Frobenius distance is $99$.

\begin{figure}[p]
\centering
\includegraphics[width=0.78\textwidth]{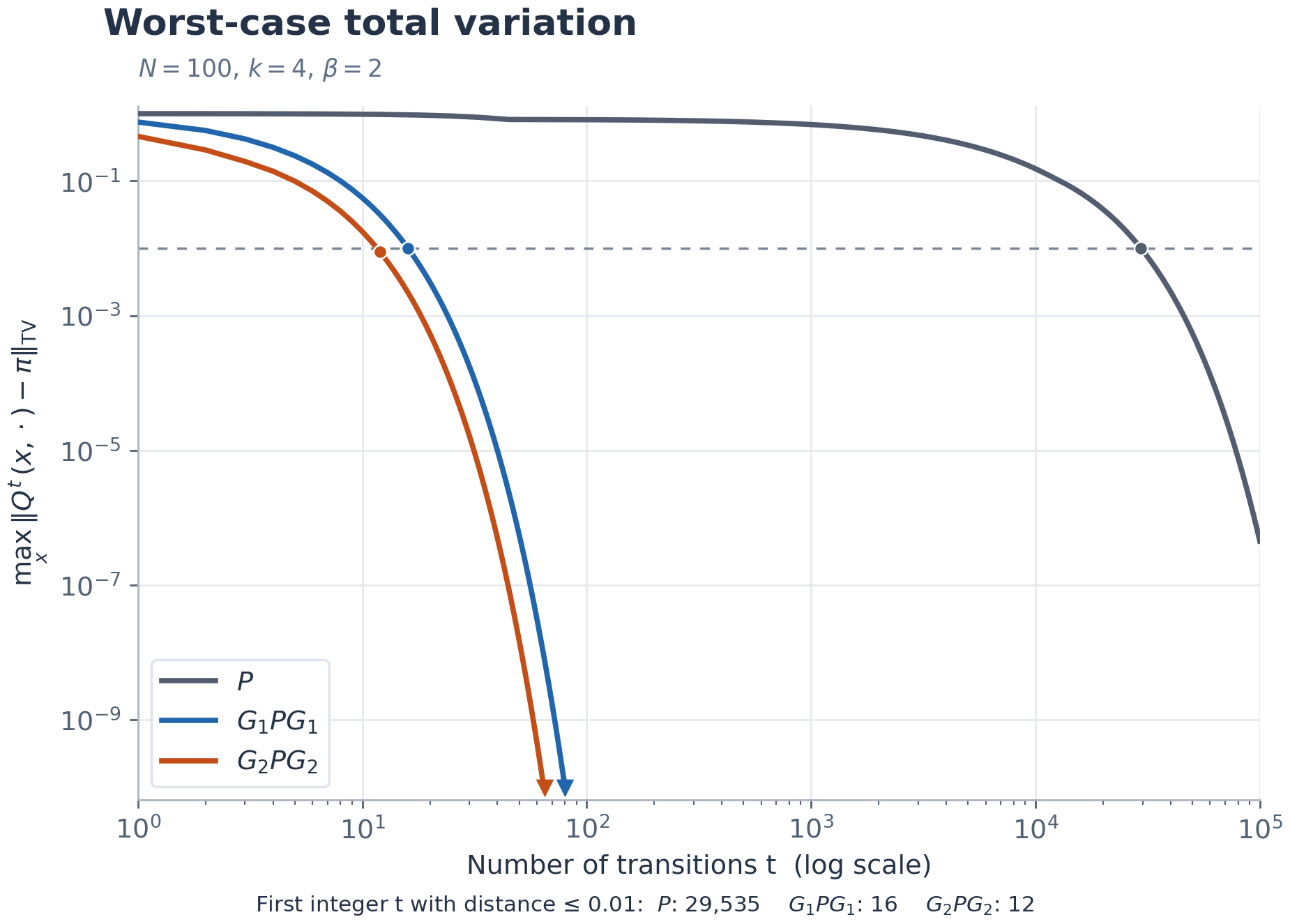}
\par\medskip
\includegraphics[width=0.78\textwidth]{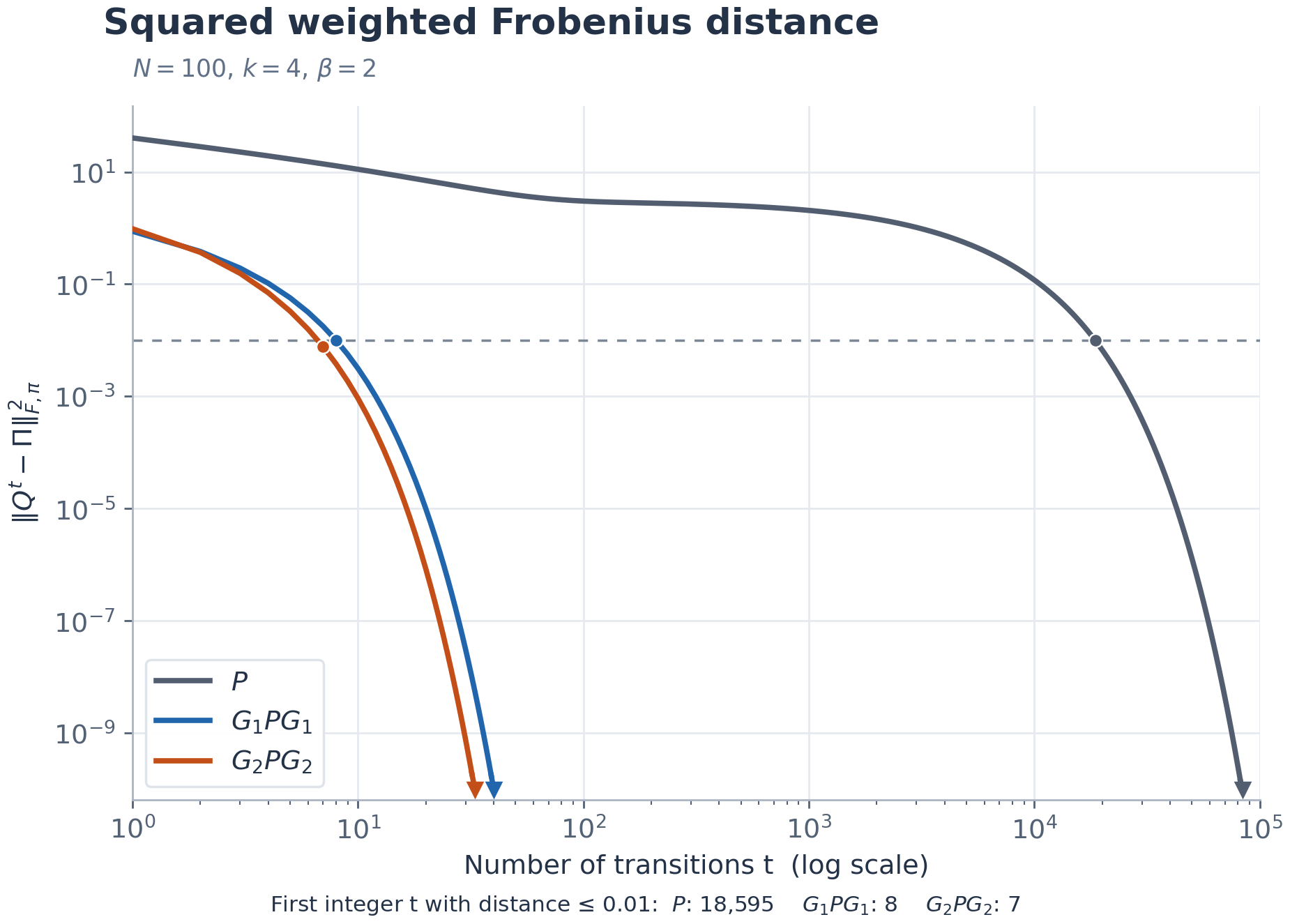}
\caption{Convergence to stationarity. Top: worst-case total variation distance, maximized over all $100$ initial states. Bottom: squared-Frobenius distance with the $\pi$-weights of Section~\ref{sec:preliminaries}. Both axes are logarithmic. Dashed lines mark $10^{-2}$, and circles mark the first integer times at or below this threshold. Downward triangles indicate that a curve has passed below the display limit $10^{-10}$.}
\label{fig:numerical-convergence}
\end{figure}

Figure~\ref{fig:numerical-convergence} shows faster convergence for both projected kernels. Table~\ref{tab:numerical-mixing} reports threshold times, each checked at the crossing and the preceding integer.

\begin{table}[htbp]
\centering
\caption{Mixing times at tolerance $10^{-2}$, measured in transitions of the indicated kernel. The squared-Frobenius column reports $t_{F,\pi}(Q,10^{-2})$.}
\label{tab:numerical-mixing}
\begin{tabular}{@{}lrr@{}}
\toprule
Kernel & Worst-case TV & Squared-Frobenius\\
\midrule
$P$ & $29{,}535$ & $18{,}595$\\
$G_1PG_1$ & $16$ & $8$\\
$G_2PG_2$ & $12$ & $7$\\
\bottomrule
\end{tabular}
\end{table}

Although $G_2PG_2$ reaches the tolerance earlier in Table~\ref{tab:numerical-mixing}, the squared-Frobenius curves cross between times $1$ and $2$: \(G_1PG_1\) has the smaller squared-Frobenius distance after one step; this ordering reverses after two steps. The algorithms optimize $\Dphi(G\|\Pi)$, not mixing time. Theorem~\ref{thm:all-time} guarantees the squared-Frobenius improvement over $P$; the TV comparison is a numerical observation.

These are transition counts, not runtimes: each projected step requires two additional conditional draws, and partition construction enumerates all $100$ states. 


\FloatBarrier

\backmatter

\section*{Declarations}

\bmhead{Funding}
Michael Choi acknowledges the financial support of the projects A-0000178-02-00 and A-8003574-00-00 at National University of Singapore.

\bmhead{Competing interests}
Both authors have no relevant financial or non-financial interests to disclose.

\bmhead{Data availability}
No data was used for the research described in the article.

\bmhead{Author contributions}
Michael Choi and Youjia Wang jointly contributed to idea formulation, execution, and manuscript writing. Michael Choi supervised the project. 

\bmhead{Declaration of AI usage}
The initial research ideas, scientific methodology and analysis, drafting of the manuscript, and coding of the numerical experiments were carried out by the authors. Artificial intelligence (AI) tools were used solely to assist with language editing, review of the manuscript, and review of the numerical code. The authors reviewed and verified all AI-assisted outputs and made all final decisions regarding the content of the manuscript. All scientific content, interpretations, and conclusions remain the sole responsibility of the authors.



\begin{thebibliography}{99}

\bibitem[Adam\v{c}\'ik(2014)]{Adamcik2014}
M.~Adam\v{c}\'ik.
\newblock The information geometry of Bregman divergences and some applications in multi-expert reasoning.
\newblock \emph{Entropy}, 16(12):6338--6381, 2014.
\newblock \href{https://doi.org/10.3390/e16126338}{doi:10.3390/e16126338}.

\bibitem[Alon et~al.(1998)Alon, Azar, Woeginger, and Yadid]{AlonEtAl1998}
N.~Alon, Y.~Azar, G.~J. Woeginger, and T.~Yadid.
\newblock Approximation schemes for scheduling on parallel machines.
\newblock \emph{Journal of Scheduling}, 1(1):55--66, 1998.

\bibitem[Bach(2013)]{Bach2013}
F.~Bach.
\newblock Learning with submodular functions: a convex optimization perspective.
\newblock \emph{Foundations and Trends in Machine Learning}, 6(2--3):145--373, 2013.
\newblock \href{https://doi.org/10.1561/2200000039}{doi:10.1561/2200000039}.

\bibitem[Banerjee et~al.(2007)Banerjee, Dhillon, Ghosh, Merugu, and Modha]{BanerjeeEtAl2007}
A.~Banerjee, I.~S. Dhillon, J.~Ghosh, S.~Merugu, and D.~S. Modha.
\newblock A generalized maximum entropy approach to Bregman co-clustering and matrix approximation.
\newblock \emph{Journal of Machine Learning Research}, 8:1919--1986, 2007.
\newblock \url{https://jmlr.org/papers/v8/banerjee07a.html}.

\bibitem[Banerjee et~al.(2005)Banerjee, Guo, and Wang]{BanerjeeGuoWang2005}
A.~Banerjee, X.~Guo, and H.~Wang.
\newblock On the optimality of conditional expectation as a Bregman predictor.
\newblock \emph{IEEE Transactions on Information Theory}, 51(7):2664--2669, 2005.
\newblock \href{https://doi.org/10.1109/TIT.2005.850145}{doi:10.1109/TIT.2005.850145}.

\bibitem[Bregman(1967)]{Bregman1967}
L.~M. Bregman.
\newblock The relaxation method of finding the common point of convex sets and its application to the solution of problems in convex programming.
\newblock \emph{USSR Computational Mathematics and Mathematical Physics}, 7(3):200--217, 1967.
\newblock \href{https://doi.org/10.1016/0041-5553(67)90040-7}{doi:10.1016/0041-5553(67)90040-7}.

\bibitem[Choi and Wang(2025)]{ChoiWang2025}
M.~C.~H. Choi and Y.~Wang.
\newblock Group-averaged Markov chains: mixing improvement.
\newblock arXiv:2509.02996, 2025.
\newblock \url{https://arxiv.org/abs/2509.02996}.

\bibitem[Choi and Wang(2026)]{ChoiWangSpectral2026}
M.~C.~H. Choi and Y.~Wang.
\newblock Spectral partitioning for $k$-block averaging kernels of finite Markov chains.
\newblock arXiv:2608.21466, 2026.
\newblock \href{https://doi.org/10.48550/arXiv.2608.21466}{doi:10.48550/arXiv.2608.21466}.

\bibitem[Choi et~al.(2025)Choi, Lim, and Wang]{ChoiLimWang2025}
M.~C.~H. Choi, R.~J.~Y. Lim, and Y.~Wang.
\newblock Group-averaged Markov chains II: tuning of group action in finite state space.
\newblock arXiv:2512.13067, 2025. Revised September 2026.
\newblock \url{https://arxiv.org/abs/2512.13067}.

\bibitem[Dhillon and Tropp(2007)]{DhillonTropp2007}
I.~S. Dhillon and J.~A. Tropp.
\newblock Matrix nearness problems with Bregman divergences.
\newblock \emph{SIAM Journal on Matrix Analysis and Applications}, 29(4):1120--1146, 2007.
\newblock \href{https://doi.org/10.1137/060649021}{doi:10.1137/060649021}.

\bibitem[Golub and Van Loan(2013)]{GolubVanLoan2013}
G.~H. Golub and C.~F. Van Loan.
\newblock \emph{Matrix Computations}.
\newblock Johns Hopkins University Press, fourth edition, 2013.
\newblock \href{https://doi.org/10.1137/1.9781421407944}{doi:10.1137/1.9781421407944}.

\bibitem[Jerrum et~al.(2004)Jerrum, Son, Tetali, and Vigoda]{JerrumEtAl2004}
M.~Jerrum, J.-B.~Son, P.~Tetali, and E.~Vigoda.
\newblock Elementary bounds on Poincar\'e and log-Sobolev constants for decomposable Markov chains.
\newblock \emph{The Annals of Applied Probability}, 14(4):1741--1765, 2004.
\newblock \href{https://doi.org/10.1214/105051604000000639}{doi:10.1214/105051604000000639}.

\bibitem[Lai and Choi(2026)]{LaiChoi2026}
Z.~Lai and M.~C.~H. Choi.
\newblock Information-theoretic coordinate subset and partition selection of multivariate Markov chains via submodular optimization.
\newblock \emph{Journal of Combinatorial Optimization}, 51, article~40, 2026.
\newblock \href{https://doi.org/10.1007/s10878-026-01417-4}{doi:10.1007/s10878-026-01417-4}.

\bibitem[Lim and Choi(2026a)]{LimChoiAdditive2026}
R.~J.~Y. Lim and M.~C.~H. Choi.
\newblock On additive averaging kernels for finite Markov chains.
\newblock \emph{Japanese Journal of Statistics and Data Science}, 2026.
\newblock \href{https://doi.org/10.1007/s42081-026-00373-x}{doi:10.1007/s42081-026-00373-x}.

\bibitem[Lim and Choi(2026b)]{LimChoiTwoBlock2026}
R.~J.~Y. Lim and M.~C.~H. Choi.
\newblock Optimising two-block averaging kernels to speed up Markov chains.
\newblock arXiv:2603.10318, 2026.
\newblock \href{https://doi.org/10.48550/arXiv.2603.10318}{doi:10.48550/arXiv.2603.10318}.

\bibitem[Nock et~al.(2013)Nock, Magdalou, Briys, and Nielsen]{NockEtAl2013}
R.~Nock, B.~Magdalou, E.~Briys, and F.~Nielsen.
\newblock Mining matrix data with Bregman matrix divergences for portfolio selection.
\newblock In F.~Nielsen and R.~Bhatia, editors, \emph{Matrix Information Geometry}, pages 373--402.
\newblock Springer, Berlin, Heidelberg, 2013.
\newblock \href{https://doi.org/10.1007/978-3-642-30232-9_15}{doi:10.1007/978-3-642-30232-9\_15}.

\bibitem[Tsallis(1988)]{Tsallis1988}
C.~Tsallis.
\newblock Possible generalization of Boltzmann-Gibbs statistics.
\newblock \emph{Journal of Statistical Physics}, 52(1--2):479--487, 1988.
\newblock \href{https://doi.org/10.1007/BF01016429}{doi:10.1007/BF01016429}.

\bibitem[Tsallis(2002)]{Tsallis2000}
C.~Tsallis.
\newblock Entropic nonextensivity: a possible measure of complexity.
\newblock \emph{Chaos, Solitons \& Fractals}, 13(3):371--391, 2002.
\newblock \href{https://doi.org/10.1016/S0960-0779(01)00019-4}
{doi:10.1016/S0960-0779(01)00019-4}.

\bibitem[Watanabe and Wolfer(2026)]{WatanabeWolfer2026}
S.~Watanabe and G.~Wolfer.
\newblock Characterization of exponential families of lumpable stochastic matrices.
\newblock \emph{Information Geometry}, 2026.
\newblock \href{https://doi.org/10.1007/s41884-026-00194-7}{doi:10.1007/s41884-026-00194-7}.

\bibitem[Wolfer and Watanabe(2021)]{WolferWatanabe2021}
G.~Wolfer and S.~Watanabe.
\newblock Information geometry of reversible Markov chains.
\newblock \emph{Information Geometry}, 4:393--433, 2021.
\newblock \href{https://doi.org/10.1007/s41884-021-00061-7}{doi:10.1007/s41884-021-00061-7}.

\bibitem[Wolfer and Watanabe(2024)]{WolferWatanabe2024}
G.~Wolfer and S.~Watanabe.
\newblock Geometric aspects of data-processing of Markov chains.
\newblock \emph{Transactions of Mathematics and Its Applications}, 8(1):tnae001, 2024.
\newblock \href{https://doi.org/10.1093/imatrm/tnae001}{doi:10.1093/imatrm/tnae001}.

\end{thebibliography}
\end{document}